\documentclass{amsart} 
\usepackage{graphicx}
\usepackage{amsfonts}
\usepackage{amsthm}
\usepackage{amsmath}
\usepackage{amssymb}
\usepackage{latexsym}
\usepackage[all]{xy}

\setlength{\topmargin}{-4mm}

\pagestyle{myheadings}
\setlength{\textwidth}{138truemm}
\setlength{\textheight}{220truemm}
\setlength{\oddsidemargin}{7mm}
\setlength{\evensidemargin}{7mm}
\swapnumbers
 
\newcounter{zlist}
\newenvironment{zlist}{\begin{list}{\rm(\arabic{zlist})}{
\usecounter{zlist}\leftmargin2.5em\labelwidth2em\labelsep0.5em
\topsep0.6ex\itemsep0.3ex plus0.2ex minus0.3ex
\parsep0.3ex plus0.2ex minus0.1ex}}{\end{list}}
 
\newcounter{blist}
\newenvironment{blist}{\begin{list}{{\rm(\alph{blist})}}{
\usecounter{blist}\leftmargin2.5em\labelwidth2em\labelsep0.5em
\topsep0.6ex \itemsep0.3ex plus0.2ex minus0.3ex
\parsep0.3ex plus0.2ex minus0.1ex}}{\end{list}}
 
\newcounter{rlist}
\newenvironment{rlist}{\begin{list}{{\rm(\roman{rlist})}}{
\usecounter{rlist}\leftmargin2.5em\labelwidth2em\labelsep0.5em
\topsep0.6ex\itemsep0.3ex plus0.2ex minus0.3ex
\parsep0.3ex plus0.2ex minus0.1ex}}{\end{list}}

\newcommand{\lra}{\longrightarrow}
\newcommand{\LRa}{\Leftrightarrow}

\newcommand{\Mor}{{\rm Mor}}

\newcommand{\Set}{\textsf{Set}}

\newcommand{\Ra}{\Rightarrow}

\newcommand{\wF}{\widehat F}

\newcommand{\wL}{{\widehat L}}

\newcommand{\wR}{\widehat R}

\newcommand{\wT}{{\widehat T}}
\newcommand{\wtau}{{\widehat\tau}}
\newcommand{\wmu}{{\widehat\mu}}

\newcommand{\weta}{\widehat \eta}
\newcommand{\wve}{\widehat \ve}

\newcommand{\ceta}{\check{\eta}}
\newcommand{\cgamma}{\check{\gamma}}
\newcommand{\cmu}{\check{\mu}}
\newcommand{\cvartheta}{\check{\vartheta}}
\newcommand{\cdelta}{\check{\delta}}
\newcommand{\cve}{\check{\varepsilon}}

\newcommand{\wdelta}{\widehat{\delta}}

\newcommand{\cC}{\mathcal{C}}

\newcommand{\odelta}{\overline{\delta}}

\newcommand{\tK}{{\widetilde K}}
\newcommand{\tL}{{\widetilde L}}

\newcommand{\ul}{\underline}

\newcommand{\A}{\mathbb{A}}
\newcommand{\uA}{\underrightarrow{\mathbb{A}}}
\newcommand{\uB}{\underrightarrow{\mathbb{B}}}
\newcommand{\rA}{\underline{\mathbb{A}}}

\newcommand{\rB}{\underline{\mathbb{B}}}
\newcommand{\uM}{\underrightarrow{\mathbb{M}}}
\newcommand{\rM}{\underline{\mathbb{M}}}
\newcommand{\B}{\mathbb{B}}

\newcommand{\M}{\mathbb{M}}

\newcommand{\oeta}{\overline{\eta}}
\newcommand{\ove}{\overline{\varepsilon}}

\newcommand{\wkappa}{\widehat{\kappa}}

\newcommand{\omu}{\overline{\mu}}

\newcommand{\ot}{\otimes}

 \newcommand{\ueps}{{\underline\varepsilon}}
\newcommand{\uDelta}{\underline\Delta}

\newcommand{\ve}{\varepsilon}

\newcommand{\up}{\upsilon}

\newcommand{\id}{I}

 \newcommand{\se}{_{\underline 1}}
 \newcommand{\sz}{_{\underline 2}}

\newcommand{\oG}{{\overline G}} 

\newcommand{\oT}{{\overline T}}

\newcommand{\dcirc}{\cdot}

 \swapnumbers

\newtheorem{theorem}{Theorem}[section]

\newtheorem{thm}[theorem]{}

\numberwithin{equation}{section}

\newcommand{\btm}{\begin{thm}}
\newcommand{\etm}{\end{thm}}

\begin{document} 

\title[Regular pairings]{Regular pairings of functors and weak (co)monads}
 \author[Robert Wisbauer]{ Robert Wisbauer}  
\address{Department of Mathematics\\ 
 Heinrich Heine University D\"usseldorf, Germany \newline
e-mail: wisbauer@math.uni-duesseldorf.de}  
 
\begin{abstract}
For functors $L:\A\to \B$ and $R:\B\to \A$ between any  
categories $\A$ and $\B$,
a  {\em  pairing} is defined by maps, natural in $A\in \A$ and  $B\in \B$,
$$\xymatrix{\Mor_\B (L(A),B) \ar@<0.5ex>[r]^{\alpha} & 
 \Mor_\A (A,R(B))\ar@<0.5ex>[l]^{\beta}}.$$ 
 
$(L,R)$ is an {\em adjoint pair} provided $\alpha$ (or $\beta$) is a bijection.
In this case the composition $RL$ defines a monad on the category $\A$,  $LR$ defines a comonad
on the category $\B$, and there is a well-known correspondence between monads (or comonads)
and adjoint pairs of functors. 

For various applications it was observed that the conditions for a unit 
of a monad was too restrictive and weakening it still allowed for 
a useful generalised notion of a monad. This led to the introduction of 
{\em weak monads} and {\em weak comonads} and the definitions needed were made
 without referring to this kind of adjunction. 
The motivation for the present paper is to show that these notions can be naturally derived from 
 pairings of functors $(L,R,\alpha,\beta)$ with  $\alpha = \alpha\dcirc \beta\dcirc \alpha$ and
$\beta = \beta \dcirc\alpha\dcirc\beta$.  
Following closely the constructions known for monads (and unital modules) and 
comonads (and counital comodules), 
 we show that any weak (co)monad on $\A$ gives rise 
to a regular pairing between $\A$ and the category of {\em compatible (co)modules}.          
\smallskip 

{\em MSC:}  18A40, 18C20, 16T15.

\smallskip
{\em Keywords:}   pairing of functors; adjoint functors; weak (co)monads;  
 $r$-unital monads; $r$-counital comonads;  
 lifting of functors; distributive laws. 
\end{abstract}
\maketitle

\vspace{-0.5cm}

\tableofcontents
\vspace{-0.5cm}

\section{Introduction}

Similar to the unit of an algebra, the existence of a unit of a monad is 
essential for (most of) the interesting properties of the related structures.  
Yet, there are numerous applications for which the request for a unit 
of a monad is too restrictive. Dropping the unit completely makes the 
theory fairly poor and the question was how to weaken 
the conditions on a unit such that still an effective theory can be developped.  
The interest in these questions was revived, for example, by the study of {\em weak Hopf algebras}
 by G. B\"ohm et al. in \cite{BS} and {\em weak entwining structures} by 
 S. Caenepeel et al. in \cite{CaeGro} (see also \cite{AFGR}, \cite{BW}). 
To handle this situation the theory of weak monads and comonads was developped 
and we refer to  \cite{B-weak} for a recent account on this theory. 
  
On any category, monads are induced by a pair of adjoint functors
and, on the other hand, any monad $(F,\mu,\eta)$ induces an adjoint pair of functors,
the free functor $\phi_F:\A\to \A_F$ and the forgetful functor $U_F:\A_F\to \A$,
where $\A_F$ denotes the catgeory of unital $F$-modules. 
This is all shown in Eilenberg-Moore \cite{EM}. 

In this correspondence the unitality of the monad is substantial
and the purpose of the present paper is
to exhibit a similar relationship between 
weak (co)monads and generalised forms of adjunctions. 
To this end, for functors $L:\A\to \B$ and $R:\B\to \A$ between
categories $\A$ and $\B$,
we consider maps 
$$\xymatrix{\Mor_\B (L(A),B) \ar@<0.5ex>[r]^{\alpha } & 
 \Mor_\A (A,R(B))\ar@<0.5ex>[l]^{\beta}},$$ 
required to be natural in $A\in \A$ and  $B\in \B$.
We call this a {\em pairing of functors}, 
or a {\em full pairing} if we want to stress that we have maps in both directions.
 Such a pairing is said to be {\em regular} provided $\alpha$ and $\beta$ are regular maps, 
 more precisely, 
\begin{center}
 $\alpha = \alpha\dcirc \beta\dcirc \alpha$\quad and \quad
$\beta = \beta \dcirc\alpha\dcirc\beta$. 
\end{center}

In Section \ref{adj-con},  regular pairings of functors are defined and some of their
 general properties are described.

 Motivated by substructures showing up in pairings of funcoter, in Section \ref{quasi-mon},
 {\em $q$-unital monads} $(F,\mu,\eta)$ on $\A$ are defined as endofunctors
$F:\A\to \A$ with natural transformations
$\mu:FF\to F$ and $\eta:\id_\A\to F$ ({\em quasi-unit}) and the sole condition that 
$\mu$ is associative.  {\em (Non-unital) $F$-modules} are defined by morphisms
$\varrho:F(A)\to A$ satisfying  $\varrho\circ\mu = \varrho\circ F\varrho$,
and the category of all $F$-modules is denoted by $\uA_F$.
For these data the free and forgetful functors,
 \begin{center}
$\phi_F:\A \to \uA_F$ \quad and \quad $U_F:\uA_F\to \A$.
\end{center}
give rise to a full pairing. From this we define {\em regularity} 
of $\eta$ and {\em compatibility} for the $F$-modules.   
The $q$-unital monad $(F,\mu,\eta)$ is said to be 
{\em $r$-unital} (short for {\em regular-unital}) 
 provided  $\eta$ is regular and $\mu$ is compatible as an $F$-module.
Now the free functor $\phi_F: \A\to \rA_F$ with the forgetful functor $U_F:\rA_F\to \A$
form a regular pairing, where $\rA_F$ denotes the (sub)category of compatible $F$-modules. 

The dual notions for (non-counital) comonads are outlined in Section \ref{qu-comon} and 
at the end of the section the comparison functors for a regular pairing $(L,R,\alpha,\beta)$
are considered (see \ref{cont-comon}).

In Section \ref{entw-qu-mon} we study the lifting of functors between categories to the 
corresponding categories of compatible modules or compatible comodules, respectively.
This is described by generalising Beck's {\em distributive laws} (see \cite{Beck}), 
also called {\em entwinings}, 
and it turns out that most of the diagrams are the same as for 
the lifting to unital modules (e.g. \cite{W-CoAl})  but to compensate the missing unitality
extra conditions are imposed on the entwining natural trans\-formation
(e.g. Proposition \ref{nat.trans.p}).
In this context we obtain a generalisation of Applegate's lifting theorem for (co)monads to 
weak (co)monads (Theorem \ref{reg-lift}, \ref{reg-lift-co}). 

Lifting an endofunctor $T$ of $\A$ to an endofunctor $\oT$ of $\rA_F$ leads to the 
question when $\oT$ is a weak monad  ($TF$ allows for the structure of a weak monad)
 and in Section \ref{lift-e-qu-mod} we provide conditions to make this happen.
 
The final Section \ref{mixed-entw} is concerned  with  weak monads  
 $(F,\mu,\eta)$ and weak comonads $(G,\delta,\ve)$ on any category $\A$
  and the interplay between the respective lifting properties.  
 Hereby properties of the lifting $\oG$ to $\rA_F$ and the lifting
$\wF$ to $\rA^G$ are investigated (see Theorems \ref{oG} and \ref{wF}) which 
generalise observations known for weak bi-algebras (and weak Hopf algebras).

 In our setting, notions like {\em pre-units}, {\em pre-monads}, {\em weak monads}, 
{\em demi-monads}, {\em pre-$A$-corings}, {\em weak corings},  {\em weak Hopf algebras} from the 
 literature (e.g.  \cite{AFGR}, \cite{B-weak}, \cite{Brz:str},   \cite{BoLaSt}, 
\cite{Wi-weak}) find their natural environment.  

In the framework of 2-categories weak structures are investigated by B\"ohm et al. in 
\cite{B-weak}, \cite{BoLaSt} and an extensive list of examples of weak structures 
is given there.

\section{Pairings of functors}\label{adj-con}

Throughout $\A$ and $\B$ will denote arbitrary categories. 
By $\id_A$, $A$ or
just by $\id$, we denote the identity morphism of an object $A\in \A$,
$\id_F$ or $F$ stands for the identity natural transformation on the functor $F$,
and $\id_\A$ means the identity functor of a category $\A$. 
 We write $F_{-,-}$ for the natural transformation of bifunctors determined by the
maps  $F_{A,A'}:\Mor_\A(A,A')  \to \Mor_\B(F(A),F(A'))$ for $A,A'\in \A$.

Before considering regularity for natural transformations we recall basic 
properties of 
 
\btm \label{reg-mor} {\bf Regular morphisms.} \em
Let $A,A'$ be any objects in a category $\A$. Then a 
morphism $f:A\to A'$ is called {\em regular} provided there is 
a morphism $g:A'\to A$ with  $fgf=f$. 
Clearly, in this case $gf:A\to A$  and $fg:A'\to A'$ 
are idempotent endomorphisms. 

Such a morphism $g$ is not necessarily unique. In particular, for $gfg$
we also have 
$ f(gfg)f=fgf =f$, and the identity $(gfg)f(gfg)= gfg$ shows that  
$gfg$ is again a regular morphism. 

If idempotents split in $\A$, then every idempotent morphism $e:A\to A$  
 determines a subobject of $A$, we denote it by $eA$. 

If $f$ is regular with $fgf=f$, then the restriction of $fg$ is the identity morphism
on $fgA'$ and $gf$ is the identity on $gfA$. 

Examples for regular morphisms are retractions, coretractions, and isomorphisms.
For modules $M,N$ over any ring, a morphism $f:M\to N$ is regular if and only if 
the image and the kernel of $f$ are direct summands in $N$ and $M$, respectively. 

This notion of regularity is derived from von Neumann regularity 
of rings. For modules (and in preadditive categories)
it was considered by Nicholson, Kasch, Mader and others
(see \cite{KaMa}).
We use the terminology also for natural transformations and functors with 
obvious interpretations.
\etm

\begin{thm}\label{setting}{\bf Pairing of functors.} \em (e.g. \cite[2.1]{Par})  
Let $L:\A\to \B$ and $R:\B\to \A$ be covariant functors. 
Assume there are morphisms, natural in $A\in \A$ and $B\in \B$,
 $$\begin{array}{l}
\alpha :\Mor_\B (L(A),B) \to \Mor_\A (A,R(B)),\\[+1mm]  
\beta :\Mor_\A (A,R(B)) \to \Mor_\B (L(A),B).
\end{array}$$ 
These maps correspond to natural transformations  
  between functors $\A^{op}\times \B \to \Set$. 
 The quadruple $(L,R,\alpha, \beta)$ is called a  
{\em (full) pairing (of functors)}. 

Given such a pairing, the morphisms, 
for $A\in \A$, $B\in \B$,   
\begin{center}
 $\eta_A:= \alpha_{A,L(A)}(\id): A\to RL(A)$ \; and \; 
 $\ve_B:=  \beta_{R(B),B} (\id) : LR(B)\to B $
\end{center}
correspond to natural transformations 
$$\eta:\id_\A \to RL, \quad \ve: LR\to \id_\B,$$
which we call {\em quasi-unit} and {\em quasi-counit} of 
$(L,R,\alpha, \beta)$, respectively. 
 
From these the transformations $\alpha$ and $\beta$ are obtained by  
$$\begin{array}{rrcl}
\alpha_{A,B}:&  L(A)\stackrel{f}\lra B  &\longmapsto & 
A\stackrel{\eta_A}\lra  RL(A)\stackrel{R(f)}\lra R(B) , \\[+1mm]
\beta_{A,B}: &  A\stackrel{g}\lra R(B)  & \longmapsto &
 L(A)\stackrel{L(g)}\lra LR(B)\stackrel{\ve_B}\lra B .
\end{array} $$
Thus the  pairing $(L,R,\alpha, \beta)$ is also described by the 
quadruple $(L,R,\eta, \ve)$.
\medskip

Naturality of $\ve$  and $\eta$ induces    
an associative product  and a quasi-unit for the endofunctor $RL:\A\to\A$,
$$ R\ve L: RLRL \to RL,\quad \eta: I_\A\to RL,$$
and   a coassociative  
  coproduct  and a quasi-counit for the endofunctor $LR:\B\to \B$,
$$  L\eta R: LR\to LRLR,  \quad \ve: LR\to I_\B. $$
\end{thm}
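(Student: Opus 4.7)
The plan is to extract every claim from the naturality of the families $\alpha$ and $\beta$ in their two arguments, and from the derived naturality of $\eta$ and $\ve$. Since $\eta_A = \alpha_{A,L(A)}(\id_{L(A)})$ and $\ve_B = \beta_{R(B),B}(\id_{R(B)})$ are values of natural transformations of bifunctors $\A^{op}\times\B\to\Set$ evaluated at identities, every assertion will reduce to evaluating an appropriate naturality square of $\alpha,\beta,\eta$ or $\ve$ at a well-chosen morphism.

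First I would verify that $\eta$ is natural. For $f:A\to A'$ in $\A$, naturality of $\alpha$ in the first argument at $f$, evaluated on $\id_{L(A')}\in\Mor_\B(L(A'),L(A'))$, yields $\alpha_{A,L(A')}(L(f))=\eta_{A'}\circ f$, whereas naturality in the second argument at $L(f):L(A)\to L(A')$, evaluated on $\id_{L(A)}$, yields $\alpha_{A,L(A')}(L(f))=RL(f)\circ\eta_A$. Equating the two gives $\eta_{A'}\circ f=RL(f)\circ\eta_A$, so $\eta$ is natural. The dual computation using naturality of $\beta$ gives naturality of $\ve$. The same device recovers $\alpha$ and $\beta$ from $\eta$ and $\ve$: for $f:L(A)\to B$, naturality of $\alpha$ in the second argument at $f$, evaluated on $\id_{L(A)}$, gives $\alpha_{A,B}(f)=\alpha_{A,B}(f\circ\id_{L(A)})=R(f)\circ\alpha_{A,L(A)}(\id_{L(A)})=R(f)\circ\eta_A$, and dually $\beta_{A,B}(g)=\ve_B\circ L(g)$.

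Finally, the associativity of $\mu:=R\ve L:RLRL\to RL$ follows from naturality of $\ve$. At an object $A$, the composite $\mu\circ\mu RL$ evaluates to $R(\ve_{L(A)})\circ R(\ve_{LRL(A)})=R\bigl(\ve_{L(A)}\circ\ve_{LRL(A)}\bigr)$, while $\mu\circ RL\mu$ evaluates to $R(\ve_{L(A)})\circ RLR(\ve_{L(A)})=R\bigl(\ve_{L(A)}\circ LR(\ve_{L(A)})\bigr)$. The naturality square of $\ve$ applied to the morphism $\ve_{L(A)}:LRL(A)\to L(A)$ identifies $\ve_{L(A)}\circ\ve_{LRL(A)}$ with $\ve_{L(A)}\circ LR(\ve_{L(A)})$, so the two composites agree. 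Coassociativity of $\delta:=L\eta R:LR\to LRLR$ is the dual statement, obtained from the naturality square of $\eta$ applied to $\eta_{R(B)}:R(B)\to RLR(B)$.

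I do not foresee any serious obstacle; the entire statement is routine naturality bookkeeping. The only care needed is to distinguish naturality in the first versus the second argument of $\alpha$ (resp.\ $\beta$) and to pick the correct identity morphism to feed into each square. Note that the statement does not claim unit or counit axioms for $(\mu,\eta)$ or $(\delta,\ve)$; the prefix \emph{quasi} is precisely the acknowledgement that those axioms are not expected at this stage, as they would require regularity or bijectivity of $\alpha$ and $\beta$, which is taken up in the subsequent sections.
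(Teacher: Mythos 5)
Your proof is correct and is exactly the standard Yoneda/naturality bookkeeping that the paper leaves implicit (it cites Pareigis and gives no argument): naturality of $\eta,\ve$ and the recovery of $\alpha,\beta$ come from evaluating the two naturality squares at identities, and (co)associativity comes from the naturality square of $\ve$ at $\ve_{L(A)}$ (dually of $\eta$ at $\eta_{R(B)}$). Your closing remark that no unit/counit axioms are being claimed is also the right reading of the word \emph{quasi}.
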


By the Yoneda Lemma we can describe compositions of $\alpha$ and $\beta$ by the images of the 
identity transformations of the respective functors. 

\btm\label{a-b-comp}{\bf Composing $\alpha$ and $\beta$.} \em
Let $(L,R,\alpha, \beta)$ be a  pairing with quasi-unit $\eta$ and 
quasi-counit $\ve$.
The descriptions of $\alpha$ and $\beta$ in \ref{setting} yield,  for the 
identity transformations $I_L:L\to L$, $I_R:R\to R$, 
$$\begin{array}{rcl} 
\alpha(\id_L)& = & \id_\A \stackrel{\eta}\lra RL, \\ 
\beta\dcirc\alpha(\id_L) & = & 
 L\stackrel{L\eta}\lra LRL \stackrel{\ve L}\lra L, \\
\alpha\dcirc \beta\dcirc\alpha(\id_L)& = & 
\id_\A \stackrel{\eta}\lra RL
  \stackrel{RL\eta}\lra RLRL \stackrel{R\ve L}\lra RL , \\
\end{array}$$
 $$\begin{array}{rcl} 
\beta(\id_R)& = & LR \stackrel{\ve}\lra \id_\B, \\ 
\alpha\dcirc\beta(\id_R) & = & R \stackrel{\eta R}\lra RLR \stackrel{R\ve }\lra R, \\
\beta\dcirc \alpha\dcirc\beta(\id_R) & = & 
LR \stackrel{L\eta R}\lra LRLR \stackrel{LR\ve }\lra LR \stackrel{\ve}\lra \id_\B. 
\end{array}
$$
\end{thm}

The following morphisms will play a special role in what follows. 

\begin{thm}\label{nat-trans}{\bf Natural endomorphisms.}  \em
With the notions from {\rm \ref{setting}}, 
we define the natural transformations
$$\begin{array}{ll}
\vartheta:=R(\beta\alpha(I_L)):& \xymatrix{RL\ar[r]^{RL\eta\quad} & RLRL \ar[r]^{\quad R\ve L} & RL ,} \\
\ul{\vartheta}:= \alpha\beta ( R(I_L)):&\xymatrix{RL\ar[r]^{\eta RL\quad} & RLRL \ar[r]^{\quad R\ve L} & RL,}\\[+1mm] 
\gamma:=L(\alpha\beta(I_R)): & \xymatrix{LR \ar[r]^{L\eta R\quad} & LRLR \ar[r]^{\quad LR\ve } & LR ,} \\
\ul{\gamma}:= \beta\alpha(L(I_R)): & \xymatrix{LR \ar[r]^{L\eta R\quad} & LRLR \ar[r]^{\quad \ve LR} & LR,}  
\end{array}$$ 
which have the properties 
$$\begin{array}{ccc}
 R\ve L\dcirc RL\vartheta = \vartheta \dcirc R\ve L, &
   R\ve L\dcirc  \ul{\vartheta} RL = \ul{\vartheta}\dcirc R\ve L, &
 \ul{\vartheta} \dcirc \vartheta =\vartheta \dcirc \ul{\vartheta};  \\[+1mm]
  LR\gamma \dcirc L\eta R = L\eta R\dcirc \gamma, &
   \ul{\gamma} LR \dcirc L\eta R = L\eta R\dcirc \ul{\gamma}, &
 \ul{\gamma} \dcirc \gamma =\gamma \dcirc \ul{\gamma}.
\end{array} $$
\end{thm}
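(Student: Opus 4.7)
The strategy is to derive all six identities by routine diagrammatic manipulation from three ingredients already in place: the associativity of the product $R\ve L:RLRL\to RL$ and the dual coassociativity of the coproduct $L\eta R:LR\to LRLR$ recorded at the end of \ref{setting}, together with naturality of $\eta$ and $\ve$ at suitable natural transformations.

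For the three $\vartheta$-identities, the following naturality relations do the work. Whiskering naturality of $\ve:LR\to \id_\B$ at $L\eta$ with $R$ on the left yields $RL\eta\dcirc R\ve L = R\ve LRL\dcirc RLRL\eta$; naturality of $\eta:\id_\A\to RL$ at the morphism $R\ve L$ gives $\eta RL\dcirc R\ve L = RLR\ve L\dcirc \eta RLRL$; and naturality of $\eta$ at $RL\eta$ yields the ``central'' identity $\eta RLRL\dcirc RL\eta = RLRL\eta\dcirc \eta RL$. Combined with associativity $R\ve L\dcirc RLR\ve L = R\ve L\dcirc R\ve LRL$, each of identities 1--3 becomes a short calculation. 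For identity 1, expanding $\vartheta = R\ve L\dcirc RL\eta$ turns the left-hand side into $R\ve L\dcirc RLR\ve L\dcirc RLRL\eta$; the right-hand side $R\ve L\dcirc RL\eta\dcirc R\ve L$ is converted by the first naturality relation to $R\ve L\dcirc R\ve LRL\dcirc RLRL\eta$, and associativity matches the two. Identity 2 is symmetric, starting from $\ul\vartheta = R\ve L\dcirc \eta RL$ and using the second naturality relation. For identity 3, expand both sides; the left-hand side reduces to $R\ve L\dcirc R\ve LRL\dcirc RLRL\eta\dcirc \eta RL$ by the second naturality relation, then the central identity, then associativity, while the right-hand side reduces to the same expression by the first naturality relation.

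Identities 4--6 are proved by the evident dualisation: $RL$ is replaced by $LR$, the associative $R\ve L$ by the coassociative $L\eta R$, the roles of $\eta$ and $\ve$ are swapped, and the arrows are reversed; naturality then produces analogues of the three relations above (for instance $L\eta R\dcirc LR\ve = LRLR\ve\dcirc L\eta RLR$, obtained from naturality of $L\eta R$ at $\ve$). The only genuine nuisance is purely notational: at intermediate stages one must carefully distinguish whiskerings such as $\eta RLRL$, $RL\eta RL$ and $RLRL\eta$ on the functor word $RLRL$, and invoke exactly the naturality square that relates them. Writing the three naturality relations out at the outset and applying them mechanically removes any ambiguity.
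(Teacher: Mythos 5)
Your verification is correct, and it is exactly the routine argument the paper has in mind: the paper states \ref{nat-trans} without proof, relying implicitly on the same ingredients you use, namely naturality of $\eta$ and $\ve$ (whiskered appropriately) together with the (co)associativity of $R\ve L$ and $L\eta R$ recorded in \ref{setting}. All three naturality relations you write down and the resulting reductions check out, as does the dualisation for $\gamma$ and $\ul{\gamma}$.
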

\smallskip 

\btm\label{a-regular}{\bf Definitions.} \em       %\label{b-regular}
Let $(L,R,\alpha,\beta)$ be a pairing (see {\rm \ref{setting}}). We call
 \medskip
 
\begin{tabular}{rcl}
  $\alpha$ {\em regular} & if &  % $\alpha\dcirc\beta\dcirc\alpha(I_L)=\alpha(I_L)$; 
     $\alpha\dcirc\beta\dcirc\alpha=\alpha$;              \\ 
 $\alpha$ {\em symmetric} & if &   $\vartheta = \ul{\vartheta}$.\\[+1mm] 
% $ R(\beta\alpha(I_L))= \alpha\beta ( R(I_L))$, i.e.,  $  
  $\beta$  {\em regular} & if & %$\beta\dcirc \alpha\dcirc\beta(I_R)=\beta(I_R)$ 
     $\beta\dcirc \alpha\dcirc\beta=\beta$;  \\[+1mm]   
 $\beta$  {\em symmetric} & if & $\gamma = \ul{\gamma}$; \\
%$\alpha\beta ( L(I_R))=L(\alpha\beta(I_R))$, i.e.,
  $(L,R,\alpha,\beta)$  {\em regular} % pairing} provided
  &if& $\alpha = \alpha\dcirc \beta\dcirc \alpha$ 
and $\beta = \beta \dcirc\alpha\dcirc\beta$.
\end{tabular}
\medskip

The following properties are easy to verify:

\begin{rlist}
\item If $\alpha$ is regular, then 
$\beta\dcirc\alpha(\id_L) $, $\vartheta$ 
 and $\ul{\vartheta}$ are idempotent and 
$\vartheta\dcirc \eta = \eta = \ul{\vartheta} \dcirc\eta$; \\
 furthermore, for $\beta':= \beta\dcirc\alpha\dcirc \beta$,  
$(L,R,\alpha,\beta')$ is a regular pairing. 

\item If $\beta$ is regular, then 
$\alpha\dcirc\beta(\id_R)$, $\gamma$ and $\ul{\gamma}$ are idempotent
 and $\ve\dcirc \gamma=\ve = \ve \dcirc \ul{\gamma}$; \\
 furthermore, for $\alpha':=\alpha\dcirc\beta\dcirc \alpha$,
 $(L,R,\alpha',\beta)$ is a regular pairing. 
\end{rlist}
\etm 

Any pairing $(L,R,\alpha,\beta)$ 
 with $\beta\dcirc \alpha=\id$ or $\alpha \dcirc \beta =\id$  is regular.  
 The second condition de\-fines the {\em semiadjoint functors} in Medvedev \cite{Med}. 
\smallskip

With manipulations known from ring theory one can show how 
  pairings with regular components can be related  with  adjunctions 
 provided idempotents split.

\btm\label{rel-adj}{\bf Related adjunctions.} \em 
Let $(L,R,\alpha,\beta)$ be a pairing
 (with quasi-unit $\eta$, quasi-counit $\ve$) and assume $\alpha$ to be regular.
 
 If the idempotent
 $h:= \beta\cdot \alpha(I_L): L\stackrel{L\eta}\lra LRL\stackrel{\ve L}\lra L$ splits, that is, 
there are a functor $\ul L: \A\to \B$ and natural transformations
 \begin{center} 
  $p:L\to \ul L$,\quad  $i:\ul L\to L$ \quad
with \quad $i\dcirc p=h$ \; and \; $p\dcirc i= \id_{\ul L}$, 
\end{center}
then the natural transformations  
$$\ul \eta:\xymatrix{\id_\A \ar[r]^\eta &RL\ar[r]^{R p}& R\ul L} ,\quad
\ul\ve: \xymatrix{\ul L R \ar[r]^{i R}&LR\ar[r]^{\ve} & \id_\B},
$$
 as quasi-unit and quasi-counit, define a  pairing 
$(\ul L, R,\ul\alpha, \ul\beta)$ with 
$\ul \beta\dcirc\ul \alpha=\id$.
 
If $\alpha\dcirc\beta= I$, then $(\wL, R,\ul\alpha, \ul \beta)$ is an adjunction. 
\medskip
 
In case the natural transformation $\beta$ is regular, similar constructions apply if we assume that the idempotent 
 $\alpha\cdot\beta(I_R): R\stackrel{\eta R}\lra RLR\stackrel{R\ve }\lra R$ splits.
\etm

The properties of the $(RL,R\ve R\eta)$ and $(LR,L\eta R,\ve)$ mentioned in \ref{setting}
motivate the defi\-nitions in the next section.

\section{Monads and modules}

\btm \label{quasi-mon} {\bf $q$-unital monads and their modules.} \em %{\em non-unital monad} 
We call $(F,\mu)$ a {\em functor with product} (or {\em non-unital monad})    
provided $F:\A\to \A$ is an endofunctor on a category $\A$ and 
$\mu: FF\to F$  is a natural transformation 
satisfying the associativity condition  $\mu \dcirc F\mu  = \mu \dcirc \mu F$.

For $(F,\mu)$, a {\em (non-unital) $F$-module} is defined as an object $A\in \A$ with a morphism
$\varrho: F(A)\to A$  in $\A$ satisfying $\varrho\dcirc F\varrho = \varrho\dcirc \mu_A$.

{\em Morphisms} between $F$-modules $(A,\varrho)$, 
$(A',\varrho')$ 
 are morphisms $f:A\to A'$ in $\A$ with 
      $\varrho'\dcirc F(f)= f\dcirc \varrho$. 
The set of all these is denoted by $\Mor_F(A,A')$.
With these morphisms, (non-unital) $F$-modules form a category which we denote by $\uA_F$.

By the associativity condition on $\mu$, for every $A\in \A$, $(F(A), \mu_A)$
is an $F$-module and this leads to the free functor and the  forgetful functor,
  $$\phi_F: \A\to \uA_F, \quad A \mapsto (F(A), \mu_A), \qquad
  U_F: \uA_F\to \A, \quad  (A,\varrho)  \mapsto A. $$
 
A triple $ (F,\mu,\eta)$ is said to be a {\em $q$-unital monad} on $\A$ provided
$(F,\mu)$ is a functor with product
and $\eta:\id_\A\to F$ is any natural transformation, called a {\em quasi-unit} (no
 additional properties are required). 
 One always can define natural transformations
$$\vartheta: F\stackrel{F\eta}\lra FF \stackrel{\mu}\lra F,
\quad \ul{\vartheta}: F\stackrel{\eta F}\lra FF \stackrel{\mu}\lra F .$$ 
Note that for any $A\in \A$, $\vartheta_A$ is in $\A_F$ and $\ul\vartheta_A$ 
is not necessarily so. 

Given $q$-unital monads $ (F,\mu,\eta)$, $(F',\mu',\eta')$ on $\A$,
 a natural transformation $h: F\to F'$ is called a {\em morphism of $q$-unital monads} if 
$$\mu'\dcirc hh = h\dcirc \mu\; \mbox{ and } \; \eta'=h\dcirc \eta. $$
 \etm

The existence of a quasi-unit allows the following generalisation of the Eilenberg-Moore 
construction for (unital) monads. 
 
\btm \label{mon-cont} {\bf  $q$-unital monads and pairings.} \em
  For a q-unital monad $(F,\mu,\eta)$ 
 we obtain a  pairing $(\phi_F,U_F,\alpha_F,\beta_F)$ with the maps,
for $A\in \A$,  $(B,\varrho)\in \uA_F$,
 $$\begin{array}{ll}
\alpha_{F}: \Mor_F(\phi_F(A),B)\to \Mor_\A(A,U_F(B)), & f\mapsto f\dcirc \eta_A,\\[+1mm]
 \beta_{F}:\Mor_\A(A,U_F(B))\to\Mor_F(\phi_F(A),B),& g\mapsto \varrho  \dcirc F(g).
\end{array}$$

The quasi-unit  $\eta$ is called {\em regular} if $\alpha_F$ is regular, that is,
$$\id_\A \stackrel{\eta}\lra F = 
\id_\A \stackrel{\eta}\lra F \stackrel{F\eta}\lra FF \stackrel{\mu}\lra F,$$
and we say  $\eta$ is {\em symmetric} if $\alpha_F$ is so, that is, $\vartheta=\ul\vartheta$.  
 \smallskip

An $F$-module $\varrho:F(A)\to A$ in $\uA_F$ is said to be {\em compatible}  if
$\beta_F\alpha_F(\varrho)=\varrho$, that is   
$$ F(A)\stackrel{\varrho}\lra A =
F(A) \stackrel{F\eta_A}\lra FF(A)\stackrel{\mu_A}\lra F(A) \stackrel{\varrho}\lra A .$$  
% (= F(A)\stackrel{\vartheta_A}\lra  F(A) \stackrel{\varrho}\lra A). 
 
In particular, the natural transformation $\mu:FF\to F$ is  compatible if
$$FF \stackrel{\mu}\lra F =  FF\stackrel{F\eta F}\lra FFF\stackrel{\mu F}\lra  FF
\stackrel{\mu}\lra F.$$
It is easy to see that this implies
$$ FF\stackrel{\vartheta \ul{\vartheta} }\lra FF \stackrel{\mu}\lra F = FF \stackrel{\mu}\lra F.$$

Let $\rA_F$ denote the full subcategory  of $\uA_F$ made up by the compatible $F$-modules.  
If $\mu$ is compatible, the image of the free functor $\phi_F$ lies in $\rA_F$ and 
(by restriction or corestriction)  we get the functor pair (keeping the notation for the functors)
 $$ \phi_{F}: \A\to \rA_F, \quad {U}_F: \rA_F \to \A,$$
and a  pairing $(\phi_F,U_F,\alpha_F,\beta_F)$ between $\A$ and $\rA_F$.

Since for $(A,\varrho)$ in $\uA_F$, $\beta_F(I_{U_F(A)}) = \varrho$, the compatibility
condition on $\varrho$ implies that $ \beta \cdot  \alpha \cdot  \beta(\varrho) = \beta(\varrho)$, i.e.,
$\ul\beta$ is regular in $( \phi_F, {U}_F, \alpha_F, \beta_F)$ when restricted to $\rA_F$.
\etm

\btm \label{def-weak}{\bf Definition.} \em
A $q$-unital monad $(F,\eta,\mu)$ is called 
\smallskip

\begin{tabular}{rcl} 
{\em $r$-unital} &if& $\eta$ is regular and $\mu$ is compatible; \\
{\em weak monad} &if&  $(F,\eta,\mu)$ is $r$-unital and $\eta$ is symmetric.
\end{tabular} 
\etm

Summarising the observations from \ref{mon-cont} we have:

\btm \label{r-mon-cont}{\bf Proposition.} Let $(F,\mu,\eta)$
be a $q$-unital monad. 
\begin{zlist}
\item The following are equivalent:
\begin{blist}
\item $(F,\mu,\eta)$ is an $r$-unital monad;  
\item 
$(\phi_F,{U}_F,\alpha_F,\beta_F)$ is a regular pairing of functors between $\A$ and $\rA_F$.
\end{blist}

\item The following are equivalent:
\begin{blist}
\item $(F,\mu,\eta)$ is weak monad;  
\item 
$(\phi_F,{U}_F,\alpha_F,\beta_F)$ is a regular pairing between $\A$ and $\rA_F$ with $\alpha_F$ symmetric.
\end{blist}
\end{zlist}
\etm
\medskip

A quasi-unit $\eta$ that is regular and symmetric is named {\em pre-unit} in the 
literature (e.g.  \cite[Definition 2.3]{FerGon});
for the notion of a weak monad (also called {\em demimonad}) see e.g. \cite{B-weak}, \cite{BoLaSt}. 
 In case $\eta$ is a unit, $q$-unital monads, $r$-unital monads and weak monads all are 
(unital) monads. In (non-unital) algebras over commutative rings, $r$-unital monads are 
obtained from idempotents while weak monads correspond to central idempotents (see \ref{quasi-alg}).

\btm \label{F-mod}{\bf Properties of weak monads.}  
Let $(F,\mu,\eta)$ be a weak monad. 
\begin{rlist}
\item $\vartheta: F\to F$ is a morphism of $q$-unital monads;  
\item for any $(A,\varphi)\in \rA_F$,
$$ 
F(A)\stackrel{\varphi}\lra A = F(A)\stackrel{\varphi}\lra A  
 \stackrel{\eta_A}\lra F(A) \stackrel{\varphi}\lra A 
$$ 
and \; $A\stackrel{\eta_A}\lra F(A) \stackrel{\varphi}\lra A$ \;
is an idempotent $F$-morphism.  
\end{rlist} 
\etm
 
In a $q$-unital monad $(F,\mu,\eta)$, if $\eta$ is regular, a compatible multiplication for $F$ can 
be found. More precisely one can easily show:

 \btm\label{F-m-e-regular}{\bf Proposition.} 
Let $(F,\mu,\eta)$ be a $q$-unital monad.
\begin{zlist}
 \item If $\eta$ is regular, then, for 
$\widetilde\mu:=  \mu \cdot F\mu \cdot  \mu  F\eta F:FF\to F$,
% $$\tilde\mu:FF\stackrel{F\eta F}\lra FFF \stackrel{F\mu}\lra FF\stackrel{\mu}\lra F,$$ 
 $(F,\widetilde\mu,\eta)$ is an $r$-unital monad. 
\item If $\mu$ is compatible, then, for 
 $\widetilde\eta:= \mu  \cdot  F\eta \cdot \eta: I_A \to F$, 
%$$\widetilde\eta:\id_\A \stackrel{\eta}\lra F\stackrel{F\eta}\lra FF \stackrel{\mu}\lra F,$$
 $(F,\mu,\widetilde\eta)$ is an $r$-unital monad.  
\item If $(F,\mu,\eta)$ is an $r$-unital monad, then for 
$$\widehat\mu:
FF\stackrel{\eta FF\eta}\lra FFFF \stackrel{\mu FF}\lra FFF\stackrel{\mu F}\lra FF
 \stackrel{\mu}\lra F,$$
 $(F,\widehat\mu,\eta)$ is a weak monad.
\end{zlist}
\etm

As a special case, we consider $q$-unital monads on the category $_R\M$ of
modules over a  commutative ring $R$ with unit. In the terminology used here 
this comes out as follows. 

\btm\label{quasi-alg}{\bf Non-unital algebras.} \em
 A {\em $q$-unital $R$-algebra} $(A,m,u)$ is a non-unital $R$-algebra $(A,m)$
with some $R$-linear map $u:R\to A$.  
Put $e:=u(1_R)\in A$. Then:
\begin{zlist}
\item $u$ is regular if and only if $e$ is an idempotent in $A$.
\item  $u$ is regular and symmetric if and only if $e$ is a central 
 idempotent (then $Ae$ is a unital $R$-subalgebra of $A$).  
\item $\mu$ is compatible if and only if $ab=aeb$ for all $a,b\in A$. 

\item If $u$ is regular, then $\widetilde m(a\ot b):=aeb$, for $a,b\in A$, defines an 
  $r$-unital algebra $(A,\widetilde m,u)$ ($\widetilde m$ and $u$ are regular).
\item If $u$ is regular, then $\widehat m(a\ot b):=eaebe$, for $a,b\in A$, defines an 
  $r$-unital algebra $(A,\widehat m,u)$ with $u$ symmetric.
\end{zlist}
\etm

Clearly, the $q$-unital algebras $(A,m,u)$ over $R$ correspond to the 
$q$-unital monads given by $(A\ot_R-, m\ot-, u\ot-)$ on $_R\M$.

 For an $A$-module 
$\varrho: A\ot M \to M$, writing as usual $\varrho(a\ot m)= am$, 
 the compatibiliy condition comes out as 
$am=aem$ for all $a\in A$, $m\in M$. 

%\btm\label{quasi-module}{\bf Compatible $A$-modules.} \em
%Let $(A,m,u)$ be an $r$-unital algebra over $R$.
%For the category $_A\rM$ of compatible  $A$-modules, the free functor 
%  $$\phi_A : {_R\M} \to {_A\rM},\quad X\mapsto (A\ot_R X, m_A\ot \id_X),$$ 
%together with the forgetful functor $U_A: {_A\rM} \to {_R\M}$ yield
%a  regular pairing $(\phi_A, U_A, \alpha_A,\beta_A)$ 
%with the maps, for $X\in {_R\M}$, $(M,\rho)\in {_A\rM}$,
%$$\begin{array}{ll}
%\alpha_{A}: \Mor_\A(A\ot_RX,M)\to \Mor_R(X,M), & 
% f\mapsto f\dcirc (u\ot A),\\[+1mm]
% \beta_{A}:\Mor_R(X,M)\to\Mor_\A(A\ot_R X,M),& g\mapsto \rho  \dcirc (A\ot g).
%\end{array}$$ \etm

 \begin{thm} \label{q-mon-act} {\bf Monads acting on functors.} \em
Let $T:\A\to \B$ be a  functor and $(G,\mu',\eta')$ a $q$-unital monad on $\B$. 
We call $T$ a left {\em  $G$-module} if there exists a natural transformation
$\varrho: GT\to T$ such that 
$$GGT\stackrel{G\varrho}\lra GT \stackrel{\varrho}\lra T = 
 GGT\stackrel{\mu'T}\lra GT \stackrel{\varrho}\lra T,$$
and we call it a  {\em compatible $G$-module} if in addition
$$GT\stackrel{\varrho}\lra T =
GT\stackrel{G\eta'}\lra GGT \stackrel{\mu' T}\lra GT \stackrel{\varrho}\lra T.$$
\end{thm}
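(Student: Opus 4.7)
Since the final \textbf{thm} block is a definition rather than an assertion, there is strictly nothing to prove; its purpose is to introduce the notions of a left $G$-module structure on a functor $T:\A\to\B$ and of compatibility for such a module. What stands in for a proof is the verification that these definitions are the correct functorial generalisations of the notions already introduced in \ref{quasi-mon} and \ref{mon-cont}.

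My plan is therefore to specialise to $\B=\A$ and $T=\id_\A$ and check that a (compatible) $G$-module in the new sense collapses to a (compatible) non-unital $G$-module in the earlier sense. Indeed, a natural transformation $\varrho:GT\to T$ becomes a natural family $\varrho_A:G(A)\to A$; the condition $\varrho\circ G\varrho = \varrho\circ \mu' T$ reduces componentwise to $\varrho_A\circ G\varrho_A = \varrho_A\circ \mu'_A$, and the compatibility clause collapses to $\varrho_A = \varrho_A\circ \mu'_A \circ G(\eta'_A)$. Both match the axioms of \ref{quasi-mon} and \ref{mon-cont}, so the terminology is consistent. I would also record that compatibility can be rewritten in terms of the canonical idempotent $\vartheta' := \mu'\circ G\eta'$ from \ref{quasi-mon}, as $\varrho = \varrho \circ \vartheta' T$, which is precisely the pattern already familiar from $\rA_G$.

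The only point demanding care is that $\varrho$ is a natural transformation and not merely a family of morphisms: all equalities must be read as equalities of natural transformations $GGT\to T$ and $GT\to T$, so the axioms are required to persist after whiskering with every morphism of $\A$. No conceptual obstacle is anticipated; the diagrams involved are those of module actions on objects, whiskered on the right by $T$, and this definition is set up to feed into the lifting and entwining results developed in Section \ref{entw-qu-mon}.
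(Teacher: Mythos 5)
You are right that \ref{q-mon-act} is a definition and carries no assertion to prove; the paper itself gives no proof, and your consistency check against \ref{quasi-mon} and \ref{mon-cont} (specialising to $T=\id_\A$ and rewriting compatibility via $\vartheta'=\mu'\dcirc G\eta'$) is a sound, if optional, sanity check. Nothing further is needed.
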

 
  \btm \label{pop-mon} {\bf Proposition.} 
Let $T:\A\to \B$ be a functor and $(G,\mu',\eta')$ a weak monad on $\B$.
Then the following are equivalent:
\begin{blist} 
\item there is a functor $\overline T: \A\to \rB_G$ with 
$T=U_G \overline T$;
\item  $T$ is a compatible $G$-module.
\end{blist}
\etm

\begin{proof}
(b)$\Ra$(a) Given $T$ as a compatible $G$-module with  
$\varrho: GT\to T$, a functor with the required properties is   
$$\overline T: \A\to \rB_G, \quad A\;\mapsto \; (T(A), \varrho_A:GT(A)\to T(A)).$$
  
(a)$\Ra$(b) 
For any $A\in \A$, there are morphisms  $\rho_A:GT(A)\to T(A)$ and we claim
that these define a natural 
transformation $\rho:GT\to T$. For this we have to show that, 
for any morphism $f:A\to \widehat{A}$,
the middle rectangle is commutative in the diagram
$$\xymatrix{ GGT(A)\ar[rr]^{\mu'_T(A)} && GT(A) \ar[ddd]^{GT(f)}\ar[dl]_{\rho_A} \\
GT(A) \ar[u]^{G\eta' T_A} \ar[d]_{GT(f)} \ar[r]^{\rho_A} & T(A)\ar[d]^{T(f)} & \\
GT(\widehat{A})\ar[d]_{G\eta' T_{\widehat{A}}} \ar[r]^{\rho_{\widehat{A}}} & T(\widehat{A})  \\
 GGT(\widehat{A})\ar[rr]^{\mu'_{T(\widehat{A})}} && GT(\widehat{A}) \ar[lu]^{\rho_{\widehat{A}}} .} $$
The top and bottom diagrams are commutative by compatibility of the $G$-modules, 
 the right trapezium is commutative since $T(f)$ is a $G$-morphism,
and the outer paths commute by symmetry of $\eta'$. 
Thus the inner diagram is commutative showing naturality of $\rho$. 
\end{proof}

\section{Comonads and comodules}\label{qu-comon}

In this section we sketch the transfer of the constructions for monads to comonads.
 
\btm \label{quasi-comon} {\bf $q$-counital comonads and their comodules.} \em
  A {\em functor with coproduct} (or {\em non-counital comonad}) is a pair $(G,\delta)$
where  $G:\A\to \A$ is an endofunctor and  $\delta:G\to GG$ is a natural transformation 
subject to the coassociativity condition $G\delta\cdot \delta=\delta G\cdot \delta$.
 
 For $(G,\delta)$, a {\em (non-counital) $G$-comodule} is defined as an object 
$A\in \A$ with a morphism $\up: A\to G(A)$  in $\A$ such that 
$G\up \dcirc \up = \delta_A \dcirc \up$.

{\em Morphisms} between  $G$-comodules $(A,\up)$, 
$(A',\up')$ are morphisms $g:A\to A'$ in $\A$ satis\-fying
$\up' \dcirc g = G(g)\dcirc \up$, 
and the set of all these is denoted by $\Mor^G(A,A')$.
With these morphisms, (non-counital) $G$-comodules form a category which we 
denote by $\uA^G$. For this there are the obvious free and forgetful functors
 $$\phi^G:\A\to \uA^G,  \quad U^G:\uA^G\to \A. $$

A triple $(G,\delta,\ve)$ is said to be a {\em $q$-counital comonad}  provided
$(G,\delta)$ is a functor with coproduct and $\ve:G\to \id_\A$ is any natural transformation,  
called a {\em quasi-counit}.
One can always define natural transformations
$$\gamma: G\stackrel{\delta}\lra GG \stackrel{G\ve}\lra G,\quad 
\ul{\gamma}: G\stackrel{\delta}\lra GG \stackrel{\ve G}\lra G.$$
 
  Morphisms of  $q$-counital comonads are defined in an obvious way (dual to 
\ref{quasi-mon}). 
\etm

\btm \label{q-co-com} {\bf $q$-counital comonads and  pairings.} \em
 For  $(G,\delta,\ve)$, the functors  $\phi^G$ and $U^G$ allow for a pairing 
$(U^G,\phi^G ,\alpha^G,\beta^G)$
where, for $A\in \A$ and $(B,\up)\in \uA^G$,
$$\begin{array}{ll}
 \alpha^G: \Mor_\A(U^G(B),A)\to \Mor^G (B,\phi^G(A)),& 
f \; \mapsto \; G(f)\dcirc \up, \\[+1mm]  
\beta^G: \Mor^G (B,\phi^G(A)) \to \Mor_\A(U^G(B),A), & 
 g \; \mapsto \; \ve_A \dcirc g.
\end{array}$$   

The quasi-counit $\ve$ is called  {\em regular} if $\beta^G$ is regular, that is,   
 $$G \stackrel{\ve}\lra \id_\A = G \stackrel{\delta}\lra GG 
  \stackrel{G\ve}\lra G \stackrel{\ve}\lra \id_\A,$$
and we say $\eta$ is {\em symmetric} provided $\phi^G$ is so, that is $\gamma=\ul\gamma$.
\smallskip 

A (non-counital) $G$-comodule $(B,\up)$ is said to be 
{\em compatible} provided $\alpha^G\beta^G(\up)=\up$, that is
$$B\stackrel{\up}\lra G(B)=B\stackrel{\up}\lra G(B)\stackrel{\delta_B}\lra GG(B) \stackrel{G\ve_B}\lra G(B).$$ 
In particular, $\delta$ is compatible if
$$G \stackrel{\delta}\lra GG = 
G \stackrel{\delta}\lra GG \stackrel{\delta G}\lra GGG \stackrel{G\ve G}\lra GG.$$
This obviously implies 
$$ G\stackrel{\delta}\lra GG = G\stackrel{\delta}\lra GG \stackrel{\gamma \ul\gamma }\lra GG.$$
By $\rA^G$  we denote the full subcategory
 of $\uA^G$ whose objects are compatible $G$-comodules. 

If $\delta$ is compatible, the image of the free functor $\phi^G$ lies in $\rA^G$ and 
(by restriction and corestriction) we obtain the functor pairing (keeping the notation for the functors)
$$\phi^G: \A \to \rA^G, \quad  U^G: \rA^G \to \A,$$
leading to a pairing  
$( U^G,\phi^G ,\alpha^G,\beta^G)$ between $\A$ and $ \rA^G$.

Since for $(B,\up)$ in $\uA^G$, $\alpha^G(I_{U^G(B)}) = \up$, the compatibility
condition on $\up$ implies that $\alpha^G \cdot \beta^G \cdot \alpha^G (\up)= \alpha^G(\up)$,
 i.e., $\alpha$ is regular in $( U^G,\phi^G ,\alpha^G,\beta^G)$ when restricted to $\rA^G$.
\etm

\btm \label{def-weak-co}{\bf Definition.} \em
A $q$-counital comonad $(G,\delta,\ve)$ is called 
\smallskip

\begin{tabular}{rcl} 
{\em $r$-counital} &if& $\ve$ is regular and $\delta$ is compatible; \\
{\em weak comonad} &if&  it is $r$-counital and $\ve$ is symmetric.
\end{tabular} 
\etm 

From the constructions above we obtain:

\btm \label{r-comon-cont}{\bf Proposition.} Let $(G,\delta,\ve)$ be a $q$-counital comonad. 
\begin{zlist}
\item The following are equivalent:
\begin{blist}
\item $(G,\delta,\ve)$ is an $r$-counital comonad;  
\item $(U^G,\phi^G ,\alpha^G,\beta^G)$ is a regular pairing of functors 
between $\A$ and $\rA^G$.
\end{blist}

\item The following are equivalent:
\begin{blist}
\item  $(G,\delta,\ve)$ is weak comonad;  
\item 
$( U^G,\phi^G ,\alpha^G,\beta^G)$ is a regular pairing of functors between $\A$ 
and $\rA^G$ with $\beta^G$ symmetric.
\end{blist}
\end{zlist}
\etm 

Similar to the situation for modules, for any (counital) comonad $(G,\delta,\ve)$, 
all non-counital $G$-comodules are compatible (i.e., $\uA^G=\rA^G$). 

\btm \label{G-comod}{\bf Properties  of weak comonads.}  
 Let $(G,\delta,\ve)$ be a weak comonad.
\begin{rlist} 
\item  $\gamma:G\to G$ is an idempotent morphism of $q$-counital comonads; 
 
\item  for any $(B,\up)\in \rA^G$, 
$$ B\stackrel{\up}\lra G(B) = B\stackrel{\up}\lra G(B)\stackrel{\ve_B}\lra B
\stackrel{\up}\lra G(B) $$  
and $B\stackrel{\up}\lra  G(B) \stackrel{\ve_B}\lra B$ 
is an idempotent $G$-morphism. 
\end{rlist}
\etm

Properties of  pairings can improved in the following sense. 

\btm\label{G-e-regular}{\bf Proposition.} 
Let $(G,\delta,\ve)$ be a $q$-counital comonad.
% with related pairing $(U^G,\phi^G,\alpha^G,\beta^G)$.

\begin{zlist}
 \item If $\ve$ is regular, then, for 
$\widetilde\delta:G\stackrel{\delta}\lra GG \stackrel{G\delta}\lra GGG
 \stackrel{G\ve G}\lra GG$, 
 $(G,\widetilde\delta,\ve)$ is an $r$-counital comonad. 

\item If $\delta$ is compatible, then, for 
$\widetilde\ve:G\stackrel{\delta}\lra GG\stackrel{G\ve}\lra G 
 \stackrel{\ve}\lra \id_\A$, 
 $(G,\delta,\widetilde\ve)$ is an $r$-counital comonad. 

\item If $(G,\delta,\ve)$ is a regular quasi-comonad, then, for  
$$\widehat\delta:G\stackrel{\delta}\lra GG \stackrel{G\delta}\lra GGG
 \stackrel{GG\delta}\lra GGGG
 \stackrel{\ve GG \ve}\lra GG,$$
$(G,\widehat\delta,\ve)$ is a weak comonad. 
\end{zlist}
\etm

As a special case, consider  non-counital comonads on the category $_R\M$ of
modules over a commutative ring $R$ with unit. In our terminology  
this comes out as follows.

\btm\label{quasi-coalg}{\bf Non-counital coalgebras.} \em
 A {\em $q$-counital coalgebra} $(C,\Delta,\ve)$ is 
a non-counital $R$-coalgebra $(C,\Delta)$ with some $R$-linear map  $\ve:C\to R$.
Writing  $\Delta(c)= \sum c_{\underline 1} \ot c_{\underline 2}$   for $c\in C$, 
  we have:

\begin{zlist}
\item $\ve$ is regular if and only if for any 
  $c\in C$, $\ve(c)= \sum \ve(c_{\underline 1}) \ve(c_{\underline 2})$.
\item  $\ve$ is symmetric if and only if 
  $\sum c_{\underline 1} \ve(c_{\underline 2})=\sum \ve(c_{\underline 1})
 c_{\underline 2}.$

\item $\Delta$ is compatible if and only if 
  $\Delta(c)= \sum c_{\underline 1} \ot  c_{\underline 2}\ve( c_{\underline 3})$.
% \item $\Delta$ is symmetric if and only if
%  $\sum c \ot \ve(d_{\underline 1})d_{\underline 2}  =
% \sum c_{\underline 1} \ve(c_{\underline 2})\ot d .$

\item If $\ve$ is regular, then 
$\widetilde\Delta(c):=\sum c_{\underline 1}\ot \ve(c_{\underline 2})c_{\underline 3}$  
  defines an $r$-counital coalgebra $(C,\widetilde\Delta,\ve)$. 

\item If $(C,\Delta,\ve)$ is an $r$-counital comonad, then 
$\widehat\Delta(c):=\sum \ve(c_{\underline 1})c_{\underline 2} 
\ot c_{\underline 3} \ve (c_{\underline 4}) $
defines an $r$-counital coalgebra $(C,\widehat\Delta,\ve)$ with $\ve$ symmetric.
\end{zlist}
\etm

Clearly, the $q$-counital coalgebras $(C,\Delta,\ve)$ over $R$ correspond to the 
$q$-counital comonads given by $(C\ot_R-, \Delta\ot-, \ve\ot-)$ on $_R\M$.
From this the compatibility conditions for $C$-comodules are derived (see \ref{q-co-com}).

\btm\label{w-coring}{\bf Weak corings and pre-$A$-corings.} \em 
Let $A$ be a ring with unit $1_A$
and $\cC$ a non-unital $(A,A)$-bimodule which is unital as right $A$-module. 
Assume there are $(A,A)$-bilinear maps 
$$ \uDelta: \cC \to \cC\ot_A \cC, \quad \ueps: \cC\to A,$$ 
where $\uDelta$ is coassociative.

$(\cC,\uDelta,\ueps)$ is called a {\em right unital weak $A$-coring} 
in \cite{Wi-weak}, provided for all $c\in \cC$,
$$(\ueps\ot \id_\cC)\dcirc \uDelta(c) = 1_A\cdot c = (\id_\cC\ot \ueps )\dcirc \uDelta(c),$$ 
which reads in (obvious) Sweedler notation as \;
 $  \sum \ueps(c\se)c\sz =1_A\cdot c = \sum c\se\ueps(c\sz).$ 

From the equations
$$\begin{array}{rl} 
(\id_\cC\ot \ueps\ot \id_\cC) \dcirc(\id_\cC\ot \uDelta) \dcirc\uDelta(c)& = \;\;
 \sum c\se \ot 1_A \cdot c\sz\; =\; \sum c\se \ot c\sz\; = \;\uDelta(c), \\[+1mm]
 (\id_\cC\ot \ueps\ot \id_\cC)   \dcirc(\uDelta\ot\id_\cC) \dcirc\uDelta  (c) & = \;\;
  \sum 1_A\cdot c\se \ot   c\sz\; =\; 1_A\cdot  \uDelta (c),
\end{array}
$$
it follows by coassociativity that $1_A\cdot \uDelta (c)= \uDelta (c)$.
Summarising we see that,
in this case, $(\cC,\uDelta,\ueps)$ induces a    
weak comonad on the category $_A\uM$ of left 
non-unital $A$-modules  (=$_A\rM$ since $A$ has a unit). 
\smallskip
 
$(\cC,\uDelta,\ueps)$ is called an {\em $A$-pre-coring} in 
\cite[Section 6]{Brz:str}, if 
$$ (\ueps\ot \id_\cC)\dcirc \uDelta(c) = c, \quad 
   (\id_\cC\ot  \ueps)\dcirc \uDelta(c) = 1_A\cdot c,$$
which reads (in Sweedler notation) as
\; $c = \sum \ueps(c\se)c\sz$, \; $1_A\cdot c =\sum c\se\ueps(c\sz).$ 

Similar to the computation above we obtain that $1_A\cdot \uDelta (c)= \uDelta (c)$.
Now $(\cC,\uDelta,\ueps)$ induces an $r$-counital comonad on $_A\uM$ 
but   $\ueps$ is not symmetric. 

Notice that in both cases considered above, restriction and corestriction 
of $\uDelta$ and $\ueps$ yield an {\em $A$-coring}  $(A\cC,\uDelta,\ueps)$ 
(e.g. \cite[Proposition 1.3]{Wi-weak}).
\etm

\begin{thm} \label{q-comon-act} {\bf Comonads acting on functors.} \em
Let $T:\A\to \B$ be a  functor and $(G,\delta,\ve)$ a weak comonad on $\B$. 
We call $T$ a left {\em (non-counital) $G$-comodule} if there exists a natural transformation
$\up: T\to GT$ such that 
$$T\stackrel{\up}\lra GT \stackrel{\up G}\lra GGT = 
 T\stackrel{\up T}\lra GT \stackrel{\delta}\lra GGT,$$
and we call it a {\em compatible $G$-comodule} if, in addition,
$$T\stackrel{\up}\lra GT =
T\stackrel{\up}\lra GT \stackrel{\delta}\lra GGT \stackrel{G\ve}\lra GT.$$

Dual to Proposition \ref{pop-mon}, given a weak comonad $(G,\delta,\ve)$ on $\B$,
a functor $T:\A\to \B$ is a compatible $G$-comodule if and only if   
there is a functor $\overline T: \A\to \rB^G$ with $T=U^G  \overline T$.
\end{thm}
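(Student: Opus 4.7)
The plan is to dualize the proof of Proposition \ref{pop-mon} almost verbatim, replacing modules by comodules, actions $\varrho$ by coactions $\up$, unit $\eta'$ by counit $\ve$, and reversing the relevant arrows.

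For the implication (b)$\Rightarrow$(a), I would take $T:\A\to\B$ equipped with a natural transformation $\up:T\to GT$ making it a compatible $G$-comodule, and simply define
\[
\overline T:\A\to \rB^G,\qquad A\;\mapsto\; (T(A),\,\up_A:T(A)\to GT(A)).
\]
The coassociativity and compatibility of the coaction at each object are exactly the pointwise versions of the conditions required for $(T(A),\up_A)$ to lie in $\rB^G$; on morphisms $f:A\to\widehat A$, naturality of $\up$ says that $T(f)$ is a $G$-comodule morphism, so $\overline T$ is a well-defined functor and plainly $U^G\overline T=T$.

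For (a)$\Rightarrow$(b), I would start with a functor $\overline T:\A\to\rB^G$ lifting $T$; this assigns to each $A\in\A$ a compatible coaction $\up_A:T(A)\to GT(A)$ and, to each $f:A\to \widehat A$, a $G$-comodule morphism structure on $T(f)$. The only thing to verify is that the collection $\{\up_A\}_{A\in\A}$ is natural in $A$, i.e.\ that the middle rectangle commutes in the diagram (dual to the one in the proof of \ref{pop-mon})
\[
\xymatrix{
GGT(A) & & GT(A) \ar[ll]_{\ve GT(A)}  \\
GT(A) \ar[u]^{\delta_{T(A)}} \ar[d]_{GT(f)} & T(A) \ar[l]_{\up_A} \ar[u]_{\up_A}\ar[d]^{T(f)} &  \\
GT(\widehat A)  & T(\widehat A) \ar[l]^{\up_{\widehat A}}\ar[d]^{\up_{\widehat A}} &  \\
GGT(\widehat A)\ar[u]^{\ve GT(\widehat A)} & & GT(\widehat A).\ar[ll]^{\delta_{T(\widehat A)}} \ar[lu]_{\up_{\widehat A}}
}
\]
The top and bottom triangles commute by the compatibility of the coactions $\up_A$ and $\up_{\widehat A}$ in $\rB^G$; the left trapezium commutes because $T(f)$ is a $G$-comodule morphism for the coactions $\up_A,\up_{\widehat A}$; and the outer path around the diagram coincides with the inner path because $\ve$ is symmetric (i.e.\ $\gamma=\ul\gamma$), which lets one commute $\delta$ past $\ve G$ and $G\ve$ in the expected way. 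Chasing around the square then forces $GT(f)\dcirc\up_A=\up_{\widehat A}\dcirc T(f)$, establishing naturality of $\up:T\to GT$, so $T$ becomes a compatible $G$-comodule.

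The main obstacle, as in \ref{pop-mon}, is that a \emph{priori} one only has pointwise coactions and has to bootstrap naturality from the hypothesis that $\overline T$ is a functor into $\rB^G$; it is precisely the symmetry of the counit $\ve$ (part of the definition of a weak comonad) that closes the diagram. Without symmetry one would obtain only naturality up to the idempotent $\gamma$, which is why the theorem is stated for \emph{weak} comonads rather than merely $r$-counital ones.
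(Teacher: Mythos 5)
Your proposal is correct and takes essentially the same route as the paper, which proves this statement simply by declaring it dual to Proposition \ref{pop-mon}: your argument is precisely that dualization, defining $\overline T$ pointwise for one direction and, for the converse, running the dual diagram chase with compatibility of the coactions, the $G$-comodule morphism property of $T(f)$, and symmetry of $\ve$ to obtain naturality of $\up$.
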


The motivation for considering generalised monads and comonads came from structures 
observed while handling full pairings of functors (see end of Section \ref{adj-con}).
Now we want to reconsider the pairings in view of these constructions.   

For any pairing $(L,R,\alpha,\beta)$ between the categories $\A$ and $\B$,   
$(RL,R\ve L, \eta)$ is a $q$-unital monad and 
 $(LR, L\eta R, \ve)$ is a $q$-counital comonad. 
It is easy to see that 
\begin{rlist}
\item {\em if $\beta$ is regular, then for any $B \in \B$,  $R\ve: RLR(B)\to R(B)$ is a compatible
    $RL$-module.}
\item {\em if $\alpha$ is regular, then for any $A\in \A$,  $L(A), L\eta: L(A)\to LRL(A)$ is a compatible 
       $LR$-comodules.}
\end{rlist}

\btm \label{cont-comon}{\bf Comparison functors.}  
  For a regular pairing $(L,R,\alpha,\beta)$ between  $\A$ and $\B$,    
 \smallskip

  $(RL, R\ve L, \eta)$ is an $r$-unital monad on $\A$  with a (comparison) functor 
 $$\widehat R : \B \to \rA_{RL}, \quad B\mapsto (R(B), R\ve: RLR(B)\to R(B)),$$

  $(LR, L\eta R, \ve)$ is an $r$-counital comonad on $\B$  with a (comparison) functor 
 $$\widetilde L: \A \to \uB^{LR}, \quad A\mapsto (L(A), L\eta: L(A)\to LRL(A)),$$

inducing commutativity of the diagrams
$$\xymatrix{\A\ar[r]^L \ar[dr]_{\phi_{RL}} & \B \ar[d]^{\wR } \ar[r]^R & \A \\
  & \rA_{RL} \ar[ru]_{U_{RL}}& ,  } \qquad 
\xymatrix{\B\ar[r]^R \ar[dr]_{\phi^{LR}} & 
 \A \ar[d]^{\widetilde L} \ar[r]^L & \B \\
  & \rB^{LR} \ar[ru]_{U^{LR}} & . } 
$$
 \etm
 
It follows from  \ref{mon-cont} that for the  $r$-unital monad
 $(RL, R\ve L, \eta)$, 
 $(\phi_{RL},U_{RL},  \alpha_{RL},  \beta_{RL})$ is a regular pairing   between 
$\A$ and $\rA_{RL}$. Similarly,  by  \ref{q-co-com}, for the $R$-countial comonad
 $(LR, L\eta R, \ve)$, 
  $(U^{LR}, \phi^{LR},  \alpha^{LR},  \beta^{LR})$ is a regular pairing  between 
 $\B$ and $\ul\B^{LR}$.

\btm \label{alpha-symm}{\bf Relating $(L,R)$ with
$(\phi_{RL},U_{RL})$ and $(U^{LR}, \phi^{LR})$.} \em
With the above notions we form the diagram
  $$\xymatrix{ \Mor_\B(L(A),B) \ar[d]_{\wR_{-,-}} \ar[r]^{\alpha} &  
  \Mor_\A(A,R(B)) \ar[r]^{\beta } &  \Mor_\B(L(A),B) 
   \ar[d]^{\wR_{-,-}}\\
   \Mor_{RL}(\phi_{RL}(A),R(B)) \ar[r]^{\alpha_{RL}} &  
   \Mor_\A(A,U_{RL}R(B)) \ar[r]^{\beta_{RL}} &   \Mor_{RL}(\phi_{RL}(A),R(B)).}$$

 This diagram is commutative if and only if $\alpha$ is symmetric  
(see Definitions \ref{a-regular}).
\smallskip

Similar constructions apply for $(L,R)$, $(U^{LR}, \phi^{LR})$ and $\tL_{-,-}$.
and $\beta $  is symmetric if and only if  
$\tK_{-,-}\dcirc \alpha \dcirc \beta = \alpha^{LR}\dcirc \beta^{LR} \dcirc \tL_{-,-}$. 
\etm

 \btm \label{reg-full}{\bf Corollary.} Consider  a pairing $(L,R,\alpha,\beta)$ (see \ref{setting}).
\begin{zlist} 
\item The following are equivalent:
\begin{blist}
\item $(L,R,\alpha,\beta)$ is a regular pairing;
\item  $(RL, R\ve L, \eta)$ is an $r$-unital monad on $\A$ and \\
 $(LR, L\eta R, \ve)$ is an $r$-counital comonad on $\B$.
\end{blist}
 \item The following are equivalent:
\begin{blist}
\item $(L,R,\alpha,\beta)$ is a regular pairing with $\alpha$ and $\beta$ symmetric;
\item  $(RL, R\ve L, \eta)$ is a weak monad on $\A$ and \\
 $(LR, L\eta R, \ve)$ is a weak comonad on $\B$.
\end{blist}
\end{zlist}
\etm

\section{Entwining monads and comonads}\label{entw-qu-mon}
  
 \begin{thm} \label{lift-f}  {\bf Lifting of functors to module categories.} \em 
 Let $(F,\mu,\eta)$ and $(L,\mu',\eta')$ be $r$-unital monads on the categories $\A$ and $\B$, 
respectively,  and $\rA_F$, $\rB_L$ the categories of the corresponding compatible modules 
 (see \ref{mon-cont}). Given functors 
 $T: \A\to \B$\,   and  \,  $\oT: \rA_F\to \rB_L$,
  we say that  $\oT$ {\em is a lifting of $T$} provided the diagram
\begin{equation}\label{lift-dia}
\xymatrix{ 
    \rA_F \ar[r]^\oT \ar[d]_{U_F} & \rB_L \ar[d]^{U_L} \\
  \A \ar[r]^T &\B }
\end{equation}
is commutative, where the $U$'s denote the forgetful functors. 
 \end{thm}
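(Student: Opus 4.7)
The statement is essentially a definition: "$\oT$ is a lifting of $T$" means by fiat that the square (\ref{lift-dia}) commutes. So in the literal sense there is nothing to prove — verifying that a concrete candidate $\oT$ is a lifting reduces to checking the two equalities $U_L\oT(A,\varrho) = T(A)$ (on objects) and $U_L\oT(f) = T(f)$ (on morphisms), both of which are equalities of data in $\B$. Taken at face value, my "proof" would simply unpack the commutativity of (\ref{lift-dia}) into these two identities.

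The substantive result this definition prepares is the characterisation of when a lifting exists, announced as Theorem \ref{reg-lift} in the introduction — an Applegate-style classification of liftings $\oT$ by natural "entwining" transformations $\lambda\colon LT \to TF$. For that my plan would be: given $\oT$, evaluate on the free modules $\phi_F(A) = (F(A),\mu_A)$, obtaining a natural $L$-action $LTF(A) \to TF(A)$ on $TF(A) = U_L\oT\phi_F(A)$, and precompose with $LT\eta$ to read off $\lambda\colon LT \to TF$. Conversely, given $\lambda$ satisfying the correct axioms, define $\oT(A,\varrho) := (T(A),\; T\varrho\circ \lambda_A)$ and let $\oT$ act as $T$ on morphisms; then check that the resulting $L$-module is actually compatible, so that $\oT$ lands in $\rB_L$. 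Functoriality of $\oT$, the commutativity of (\ref{lift-dia}), and the inverse nature of the two constructions are then straightforward bookkeeping.

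The main obstacle — specific to the $r$-unital setting and absent from the classical Applegate theorem — will be to isolate the correct axioms on $\lambda$. Beyond the usual mixed distributive compatibilities with $\mu,\mu'$, the quasi-units $\eta,\eta'$ are only \emph{regular} rather than two-sided units, so additional symmetry and compatibility conditions on $\lambda$ must be imposed (as signalled by Proposition \ref{nat.trans.p}) both to ensure that $T\varrho \circ \lambda_A$ is genuinely associative for $\mu'$ and to force the resulting module into $\rB_L$ and not merely $\uB_L$. The manipulations needed should parallel those in \ref{mon-cont} and Proposition \ref{r-mon-cont}, where compatibility of an $F$-module is the identity $\varrho = \varrho \circ \mu_A \circ F\eta_A$; the analogous equation for the induced $L$-action is precisely what the extra axioms on $\lambda$ must enforce, and getting these conditions minimal and symmetric is, I expect, where the real work lies.
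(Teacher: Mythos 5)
You are right that \ref{lift-f} is a definition rather than a theorem, so there is nothing to prove beyond unpacking the commutativity of the square, and the paper accordingly gives no proof. Your sketch of how the subsequent characterisation works (extracting $\lambda$ from a lifting via the free modules and $LT\eta$, and conversely setting $\oT(A,\varrho)=(T(A),T\varrho\circ\lambda_A)$ with extra conditions forcing compatibility) matches the route the paper takes in Propositions \ref{nat.trans.p}, \ref{reg-lift-p} and Theorem \ref{reg-lift}.
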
 
 
\begin{thm} \label{nat.trans.p} {\bf Proposition.} 
With the data given in {\rm \ref{lift-f}}, 
consider the functors $TF,\, LT: \A\to \B$ and
 a natural transformation $\lambda: LT\to TF$.
The non-unital $F$-module $(F,\mu)$ induces an $L$-action on $TF$,
  $$\chi: LTF\stackrel{\lambda F}\lra
TFF\stackrel{T\mu}\lra TF.$$
\begin{zlist}
\item If $(TF,\chi)$ is a (non-unital) $L$-module, then we get the commutative diagram  
\begin{equation}\label{lift-equ}
\xymatrix{ 
L  L  T\ar[r]^{L  \lambda} \ar[d]_{\mu'  T} &
   L  T F \ar[r]^{L T\vartheta  } & L  T F \ar[r]^{\lambda  F} & 
   T F F  \ar[d]^{T  \mu} \\
  L  T \ar[rr]^{\lambda}& & T F \ar[r]^{T\vartheta}  & T F .} 
\end{equation}

\item If   $(TF,\chi)$ is a compatible $L$-module, then  
(with $\vartheta' =\mu'\cdot F\eta'$)
\begin{equation}\label{lift-equ-reg}
 \xymatrix{LT \ar[r]^{\vartheta' T} &
LT  \ar[r]^{\lambda} & TF \ar[r]^{T\vartheta }  & TF} =
\xymatrix{LT \ar[r]^{\lambda}  & TF \ar[r]^{ T\vartheta}  & TF. }
\end{equation}

\item If $\eta$ is symmetric in $(F, \mu,\eta)$ and $(A,\varphi)$ is a compatible $F$-module, then 
\begin{equation}\label{F-reg-2}
T\varphi \dcirc \lambda_A = 
   T\varphi\dcirc\lambda_A \dcirc LT\varphi \dcirc LT\eta_A .
\end{equation}   
\end{zlist}
\end{thm}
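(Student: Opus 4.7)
The plan is to derive each of the three assertions by starting from the relevant defining equation for $(TF,\chi)$ (module condition, compatibility, or behaviour of $\varphi$), precomposing by a suitable ``$\eta$-insertion'' natural transformation, and then pushing the $\eta$ across $\lambda$ and across $\mu'$ (or $\eta'$) via naturality until the desired identity drops out. The key coherence used throughout is naturality of $\lambda:LT\to TF$ at the morphism $\eta_A:A\to F(A)$, which gives $\lambda F\dcirc LT\eta=TF\eta\dcirc\lambda$, together with naturality of $\mu'$ and $\eta'$ at the morphism $T\eta$.

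For (1), I would start from the module axiom $\chi\dcirc L\chi=\chi\dcirc\mu' TF$ for $\chi=T\mu\dcirc\lambda F$, and precompose with $LLT\eta:LLT\to LLTF$. On the left, naturality of $\lambda$ at $\eta$ rewrites $L\lambda F\dcirc LLT\eta$ as $LTF\eta\dcirc L\lambda$; combining with $LT\mu\dcirc LTF\eta=LT\vartheta$ produces exactly the top path of diagram~(\ref{lift-equ}). On the right, naturality of $\mu'$ at $T\eta$ rewrites $\mu' TF\dcirc LLT\eta$ as $LT\eta\dcirc\mu' T$, and one more application of $\lambda F\dcirc LT\eta=TF\eta\dcirc\lambda$ turns $T\mu\dcirc\lambda F\dcirc LT\eta\dcirc\mu'T$ into $T\vartheta\dcirc\lambda\dcirc\mu'T$, which is the bottom path.

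For (2), the same method is applied to the compatibility equation $\chi=\chi\dcirc\mu'_{TF}\dcirc L\eta'_{TF}$, precomposed now with $LT\eta:LT\to LTF$. The left-hand side becomes $T\vartheta\dcirc\lambda$ by $\lambda F\dcirc LT\eta=TF\eta\dcirc\lambda$ and $T\mu\dcirc TF\eta=T\vartheta$. On the right-hand side, naturality of $\eta'$ at $T\eta$ gives $L\eta'_{TF}\dcirc LT\eta=LLT\eta\dcirc L\eta'_T$, then naturality of $\mu'$ at $T\eta$ gives $\mu'_{TF}\dcirc LLT\eta=LT\eta\dcirc\mu'_T$, producing $T\vartheta\dcirc\lambda\dcirc\mu'_T\dcirc L\eta'_T=T\vartheta\dcirc\lambda\dcirc\vartheta' T$, as required for (\ref{lift-equ-reg}).

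For (3), the argument is purely categorical: naturality of $\lambda$ at the endomorphism $\varphi\dcirc\eta_A:A\to A$ (or separately at $\varphi$ and at $\eta_A$) identifies $\lambda_A\dcirc LT\varphi\dcirc LT\eta_A$ with $TF(\varphi\dcirc\eta_A)\dcirc\lambda_A=TF\varphi\dcirc TF\eta_A\dcirc\lambda_A$. Then applying $T\varphi$ on the left and using the module identity $\varphi\dcirc F\varphi=\varphi\dcirc\mu_A$ yields $T(\varphi\dcirc\mu_A\dcirc F\eta_A)\dcirc\lambda_A=T(\varphi\dcirc\vartheta_A)\dcirc\lambda_A$, and compatibility of $\varphi$ (i.e.\ $\varphi\dcirc\vartheta_A=\varphi$) collapses this to $T\varphi\dcirc\lambda_A$. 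Symmetry of $\eta$ enters naturally through the reformulation $\varphi=\varphi\dcirc\eta_A\dcirc\varphi$ from~\ref{F-mod}(ii), which one may prefer to invoke as the conceptual source of (\ref{F-reg-2}).

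The only real obstacle is bookkeeping: keeping the horizontal versus vertical compositions of natural transformations (distinguishing $LT\eta$ from $L\eta T$, and $\mu'_{TF}$ from $LT\mu$) correctly aligned at each step. Once the precomposition with the appropriate ``$\eta$-insertion'' is chosen, the three identities follow mechanically from the naturality squares and the defining equations for $\vartheta$, $\vartheta'$, and compatibility.
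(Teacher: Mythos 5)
Your proof is correct, and for parts (1) and (2) it is essentially the argument the paper intends: the paper's proof is only the remark that one proceeds ``as in the monad case, replacing the identity on $F$ at some places by $\vartheta=\mu\dcirc F\eta$'', and your precomposition of the $L$-module equation (resp.\ the compatibility equation) for $\chi=T\mu\dcirc\lambda F$ with $LLT\eta$ (resp.\ $LT\eta$), followed by naturality of $\lambda$, $\mu'$ and $\eta'$, is exactly that adaptation; you also correctly read $\vartheta'$ as $\mu'\dcirc L\eta'$ despite the misprint $\mu'\cdot F\eta'$ in the statement. For part (3) your route genuinely differs from the paper's hint. The paper says Proposition \ref{F-mod} (and hence symmetry of $\eta$, i.e.\ the weak monad hypothesis) is needed, whereas your computation --- naturality of $\lambda$ at $\varphi\dcirc\eta_A$, then $\varphi\dcirc F\varphi\dcirc F\eta_A=\varphi\dcirc\mu_A\dcirc F\eta_A=\varphi\dcirc\vartheta_A=\varphi$ by the module identity and compatibility --- only ever uses $\vartheta$, never $\ul\vartheta$, so it establishes (\ref{F-reg-2}) without invoking symmetry at all. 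This is a slightly stronger and more elementary argument; symmetry becomes relevant only if one instead inserts $\eta$ on the other side, via $\varphi=\varphi\dcirc\eta_A\dcirc\varphi$ from \ref{F-mod}(ii), which is the route the paper suggests and which you mention as an alternative.
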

\begin{proof} The proof follows essentially as in the monad case replacing 
the identity on $F$ at some places by  $\vartheta=\mu\dcirc F\eta$ (see \ref{quasi-mon}).

To show (3), Proposition \ref{F-mod} is needed.
\end{proof}

 \begin{thm} \label{reg-lift-p} {\bf Proposition.} 
  Let $(F,\mu,\eta)$ and $(L,\mu',\eta')$ be $r$-unital monads on $\A$ and $\B$, respectively, and $T:\A\to \B$ any functor.
Then a natural transformation $\lambda: L T\to T F$ induces a lifting 
to the compatible modules,
$$\oT:\rA_F \to \rB_L, \quad (A,\varphi) \mapsto 
  (T(A), T\varphi \dcirc \lambda_A:LT(A)\to T(A) ),$$  
if and only if the diagram {\em (\ref{lift-equ})} is commutative 
and  equation {\em (\ref{lift-equ-reg})} holds.
\end{thm}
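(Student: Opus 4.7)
The plan exploits two structural facts: $\oT$, if it exists, is determined by its values on $\rA_F$, and the free modules $(F(A),\mu_A)$ automatically lie in $\rA_F$ since $\mu$ is compatible by $r$-unitality of $(F,\mu,\eta)$. Together these reduce the necessity direction to Proposition~\ref{nat.trans.p} applied to the canonical action $\chi = T\mu\dcirc \lambda F$ on $TF$, while the sufficiency direction is a direct verification on a general $(A,\varphi)\in\rA_F$.

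For sufficiency, set $\bar\varphi_A := T\varphi\dcirc\lambda_A$ and verify in turn the $L$-action axiom, its compatibility, and preservation of morphisms. By naturality of $\lambda$ at the morphism $\varphi$ and the $F$-module identity $\varphi\dcirc F\varphi = \varphi\dcirc \mu_A$, the associativity condition $\bar\varphi_A \dcirc L\bar\varphi_A = \bar\varphi_A \dcirc \mu'_{T(A)}$ reduces to
$$T\varphi\dcirc T\mu_A\dcirc\lambda_{F(A)}\dcirc L\lambda_A \;=\; T\varphi\dcirc\lambda_A\dcirc \mu'_{T(A)}.$$
The right hand side is what I would obtain by postcomposing diagram (\ref{lift-equ}) evaluated at $A$ with $T\varphi$ and absorbing the stray $T\vartheta_A$ via compatibility of $\varphi$ ($T\varphi\dcirc T\vartheta_A = T\varphi$); the extra $LT\vartheta_A$ on the left becomes harmless after (i) commuting it through $\lambda_{F(A)}$ by naturality of $\lambda$ at $\vartheta_A$, (ii) applying the monad identity $\mu_A\dcirc F\vartheta_A = \vartheta_A\dcirc \mu_A$ (a consequence of associativity and naturality of $\mu$), and (iii) absorbing the resulting $T\vartheta_A$ into $T\varphi$ one more time. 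Compatibility of $\bar\varphi_A$ follows in a single step by applying the same absorption trick to equation (\ref{lift-equ-reg}), yielding $\bar\varphi_A\dcirc\vartheta'_{T(A)} = \bar\varphi_A$. Preservation of morphisms needs no appeal to compatibility: for an $F$-morphism $f:(A,\varphi)\to(A',\varphi')$, naturality of $\lambda$ at $f$ combined with $T(f)\dcirc T\varphi = T\varphi'\dcirc TF(f)$ immediately yields $L$-linearity of $T(f)$.

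For necessity, assume the lifting $\oT$ exists. Applied to the free module $(F(A),\mu_A)\in\rA_F$ it produces $(TF(A),\,T\mu_A\dcirc\lambda_{F(A)}) = (TF(A),\chi_A)\in\rB_L$, and varying $A$ makes $(TF,\chi)$ into a compatible $L$-module in the sense of \ref{q-mon-act}. Parts (1) and (2) of Proposition~\ref{nat.trans.p} then deliver diagram (\ref{lift-equ}) and equation (\ref{lift-equ-reg}) respectively.

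The main obstacle is the associativity step of the sufficiency direction, because diagram (\ref{lift-equ}) carries stray factors of $\vartheta$ that have to be disposed of, and the only available tool for doing so is compatibility of $\varphi$. This is the structural reason why (\ref{lift-equ}) is formulated with $T\vartheta$ in place of $T$ and why the lifting targets $\rA_F$ rather than the larger category $\uA_F$: on non-compatible modules the stray $\vartheta$'s cannot be absorbed.
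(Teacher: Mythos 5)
Your proposal is correct and follows essentially the same route as the paper: the "only if" direction is obtained by feeding the free (compatible) modules $(F(A),\mu_A)$ into the lifting and invoking Proposition \ref{nat.trans.p}, and the "if" direction is the direct verification from the unital-monad case, with the stray $\vartheta$'s absorbed via compatibility of $\varphi$ (together with $\mu\dcirc F\vartheta=\vartheta\dcirc\mu$). You have merely written out explicitly the "slight modification" that the paper leaves to the reader, and your computations check out.
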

\begin{proof} One direction follows from 
Proposition \ref{nat.trans.p}, the other one by a slight modification of the
proof in the monad case.
\end{proof}

To show that the lifting property implies the existence of a natural transformation 
$\lambda: L T\to T F$ we need the symmetry of the units, that is, we require 
the $r$-unital monads to be weak monads.  
Then we can extend Applegate's lifting theorem for monads (and unital modules) 
(e.g. \cite[Lemma 1]{John}, \cite[3.3]{W-CoAl}) to weak monads (and compatible modules).  

\begin{thm} \label{reg-lift} {\bf Theorem.} 
  Let $(F,\mu,\eta)$ and $(L,\mu',\eta')$ be weak monads on 
 $\A$ and $\B$, respectively.  
For any functor $T:\A\to \B$, there are bijective correspondences between  
\begin{rlist}
\item liftings of $T$ to $\oT: \rA_F\to \rB_L$;
\item compatible $L$-module structures $\varrho$ on $TU_F:\A_F\to \B$;  
\item  natural transformations $ \lambda: L T\to T F$ 
 with commuting diagrams
\begin{equation}\label{lift-equ-r}
\xymatrix{ 
L  L  T\ar[r]^{L    \lambda} \ar[d]_{\mu' T} &
 L  T F \ar[r]^{  \lambda  F} & 
   T F F  \ar[d]^{T \mu} \\
  L  T \ar[rr]^{ \lambda}& & T F  ,} \qquad 
\xymatrix{ LT \ar[r]^{\vartheta' T} \ar[d]_\lambda \ar[dr]^\lambda  & LT \ar[d]^\lambda \\
           TF \ar[r]_{T\vartheta} & TF .}
\end{equation}
\end{rlist}
\end{thm}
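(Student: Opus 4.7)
The plan is to establish the cycle (iii) $\Rightarrow$ (i) $\Leftrightarrow$ (ii) $\Rightarrow$ (iii), and then check that the three assignments are mutually inverse.

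The implication (iii) $\Rightarrow$ (i) reduces to Proposition \ref{reg-lift-p}: the identity $T\vartheta\circ\lambda = \lambda$ from the second square of (\ref{lift-equ-r}) yields, after applying $L$, $LT\vartheta\circ L\lambda = L\lambda$, so the first square of (\ref{lift-equ-r}) already implies diagram (\ref{lift-equ}); similarly $\lambda\circ\vartheta' T = \lambda$ gives (\ref{lift-equ-reg}) at once. Compatibility of the induced $L$-action $T\varphi\circ\lambda_A$ on $\oT(A,\varphi)$ is immediate by postcomposing $\lambda\circ\vartheta' T = \lambda$ with $T\varphi$. The equivalence (i) $\Leftrightarrow$ (ii) is a direct application of Proposition \ref{pop-mon} with the weak monad $L$ on $\B$ and the functor $TU_F:\rA_F\to\B$: a lifting $\oT$ as in (\ref{lift-dia}) is exactly a lifting of $TU_F$ along $U_L$, and by Proposition \ref{pop-mon} (whose proof uses the symmetry of $\eta'$, which is available since $L$ is a weak monad) such liftings correspond bijectively to compatible $L$-module structures $\varrho:LTU_F\to TU_F$.

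For the main implication (ii) $\Rightarrow$ (iii), I would define
\[
\lambda_A := \varrho_{\phi_F(A)}\circ LT\eta_A : LT(A)\to LTF(A)\to TF(A),
\]
with naturality in $A$ following from naturality of $\eta$ together with naturality of $\varrho$ along the free-module morphisms $F(f):\phi_F(A)\to\phi_F(A')$. The identity $\lambda\circ\vartheta' T = \lambda$ is obtained by sliding $\vartheta'_{T(A)}$ across $LT\eta_A$ via naturality of $\vartheta'$ and then applying compatibility of $\varrho$ as $L$-module. The identity $T\vartheta\circ\lambda = \lambda$ comes from expanding $T\vartheta_A = T\mu_A\circ TF\eta_A$ and applying naturality of $\varrho$ along the $F$-module morphisms $F\eta_A:\phi_F(A)\to\phi_F(F(A))$ and $\mu_A:\phi_F(F(A))\to\phi_F(A)$ (both of which check directly from naturality and associativity of $\mu$), finally collapsing $LT(\mu_A\circ\eta_{F(A)}\circ\eta_A) = LT(\ul\vartheta_A\circ\eta_A) = LT\eta_A$ by \ref{a-regular}(i).

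The associativity square of (\ref{lift-equ-r}) is the main obstacle. Using naturality of $\mu'$ and the $L$-associativity of $\varrho$ one gets
\[
\lambda_A\circ\mu'_{T(A)} \;=\; \varrho_{\phi_F(A)}\circ L\lambda_A,
\]
while naturality of $\varrho$ along $\mu_A$ gives
\[
T\mu_A\circ\lambda_{F(A)} \;=\; \varrho_{\phi_F(A)}\circ LT\ul\vartheta_A.
\]
Matching these two identities requires $T\ul\vartheta_A$ to act trivially on $\lambda_A$, and this is precisely where the symmetry of $\eta$ is indispensable: with $\ul\vartheta = \vartheta$, $\vartheta_A$ becomes an $F$-module endomorphism of $\phi_F(A)$, so naturality of $\varrho$ combined with the already-established $T\vartheta\circ\lambda = \lambda$ closes the argument. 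Finally, the mutual inverseness of the three assignments is verified as follows: starting from $\lambda$, the naturality relation $\lambda_{F(A)}\circ LT\eta_A = TF\eta_A\circ\lambda_A$ and the identity $T\vartheta\circ\lambda = \lambda$ together reproduce $\lambda_A$ from the recovered $L$-action $T\mu_A\circ\lambda_{F(A)}$; the reverse round-trip, from $\varrho$ to $\lambda$ and back, is handled using the factorisation $\varphi = \varphi\circ\eta_A\circ\varphi$ from \ref{F-mod}(ii) and naturality of $\varrho$ along $\varphi:\phi_F(A)\to(A,\varphi)$.
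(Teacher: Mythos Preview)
Your proof is correct and follows exactly the paper's route: (i)$\Leftrightarrow$(ii) via Proposition \ref{pop-mon}, (ii)$\Rightarrow$(iii) by setting $\lambda := \varrho_{\phi_F(-)}\circ LT\eta$, and (iii)$\Rightarrow$(i) via Proposition \ref{reg-lift-p}. You supply considerably more detail than the paper's brief argument --- in particular the explicit verification of the diagrams in (\ref{lift-equ-r}) and the round-trip identities, none of which the paper spells out.
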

 \begin{proof}
(i)$\LRa$(ii) follows by  Proposition \ref{pop-mon}. 
\smallskip

(ii)$\Ra$(iii) Given the compatible $L$-module structure map  $\varrho$, put  
 $$\lambda:= \varrho F\dcirc LT\eta: LT\stackrel{LT\eta}\lra LTF 
  \stackrel{\varrho F}\lra TF.$$ 
 Notice that for $\lambda$ we can take $T\vartheta\cdot\lambda$ from Proposition \ref{nat.trans.p}.

(iii)$\Ra$(i) Given $\lambda$ with the commutative diagram in (iii),
it follows by Propositions \ref{reg-lift-p} that  
 $\varrho_A:=T\varphi\dcirc \lambda_A$ 
induces a lifting.
\end{proof}

%Recall that the natural transformation $T\vartheta$ (see \ref{lift-f}) shows the deviation of
% the quasi-unit from unitality. We list some properties and relations which are of use for the proofs.
%\begin{thm}\label{Apple-prop}  {\bf Lemma.} 
%Let $(F,\mu,\eta)$,  $(L,\mu',\eta')$ be $q$-unital monads
%and $T: \A \to \B$ a functor  with (any) natural transformation $\lambda:LT\to TF$ and consider
%$$  \widehat\kappa :\xymatrix{ TF\ar[r]^{\eta' TF} &LTF \ar[r]^{\lambda F} & TFF %\ar[r]^{T\mu}&TF. }$$  
%\begin{zlist}
%\item $\widehat\kappa \dcirc T\vartheta  = T\vartheta  \dcirc \widehat\kappa  $.
%\item
%  If $\lambda \dcirc \eta' T = T \eta$, then $\widehat\kappa= T\underline\vartheta $.
%\item If the diagram {\rm (\ref{lift-equ-r})} is commutative, then 
%$\lambda \dcirc \ul{\vartheta}'T= \widehat\kappa \dcirc \lambda$. 
%
%\item If {\rm (\ref{lift-equ-r})} is commutative and $\eta'$ is regular,
%then $\widehat\kappa$ is idempotent.\end{zlist}\end{thm}
% \begin{proof} All assertions are shown by standard computations.\end{proof}

 \begin{thm} \label{lift-f-co} {\bf Lifting of functors to  comodules.} \em 
 Let $(G,\delta,\ve)$ and $(H,\delta',\ve')$ be $r$-unital comonads on 
the categories $\A$ and $\B$, respectively, 
and  $\rA^G$, $\rB^H$ the corresponding categories of the compatible comodules 
 (see \ref{q-co-com}). Given a functor  
 $T: \A\to \B$, a functor $\wT: \rA^G\to \rB^H$, 
 is said to be {\em is a lifting of $T$} if the diagram
\begin{equation}\label{lift-comod}
\xymatrix{ 
    \rA^G \ar[r]^\wT \ar[d]_{U^G} & \rB^H \ar[d]^{U^H} \\
  \A \ar[r]^T &\B }
\end{equation}  %$U^H \wT = T U^G$,
is commutative 
 where the $U$'s denote the forgetful functors. 
 \end{thm}  

\begin{thm} \label{nat.trans.p.co} {\bf Proposition.} 
With the data given in {\rm \ref{lift-f-co}},  consider the  functors $TG,\, HT: \A\to \B$ and
 a natural transformation $\psi: TG\to HT$.
The (non-counital) $G$-comodule $(G,\delta)$ induces an $H$-coaction on $TG$,
  $$\zeta: TG\stackrel{T\delta}\lra TGG\stackrel{\psi G}\lra HTG.$$
\begin{zlist}
\item If $(TG,\zeta)$ is a (non-counital) $H$-comodule,  we get the commutative diagram  
\begin{equation}\label{lift-equ-co}
\xymatrix{ 
 TG \ar[r]^{ T\gamma } \ar[d]_{T\delta} &
  T G \ar[rr]^{\psi} & & H  T \ar[d]^{\delta'T}  \\
 TGG \ar[r]^{\psi G}& HTG \ar[r]^{HT\gamma } & HT G \ar[r]^{H\psi} & HHT .} 
\end{equation}

\item If $H$  $(TG,\zeta)$ is a compatible $H$-module, then 
\begin{equation}\label{lift-equ-reg-co}
\xymatrix{TG \ar[r]^{ T\gamma}
 & TG  \ar[r]^{\psi} & HT \ar[r]^{\gamma'T}  & HT} =
\xymatrix{TG  \ar[r]^{T\gamma }& TG\ar[r]^{\psi} & HT. }
\end{equation}

\item  If $\ve$ is symmetric and $(A,\up)$ is a compatible $G$-comodule, then
$$\psi\cdot T\up = HT\ve \cdot HT\up \cdot \psi \cdot T\up.$$
\end{zlist}
\end{thm}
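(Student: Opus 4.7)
The plan is to dualize the proof of Proposition \ref{nat.trans.p}: monads become comonads, products coproducts, units counits, modules comodules, arrows reverse, $\vartheta$ becomes $\gamma$, and $\lambda \colon LT \to TF$ is replaced by $\psi \colon TG \to HT$. The arguments then reduce to manipulations with the naturality of $\psi$, $\delta'$, $\ve$ and $\ve'$, together with the coassociativity and compatibility axioms inherited from the respective comonads.

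For (1), I would start from the coassociativity $H\zeta \dcirc \zeta = \delta'_{TG} \dcirc \zeta$ with $\zeta = \psi G \dcirc T\delta$ and post-compose both sides with $HHT\ve \colon HHTG \to HHT$. Naturality of $\delta'$ at the morphism $T\ve \colon TG \to T$ gives $\delta'T \dcirc HT\ve = HHT\ve \dcirc \delta' TG$, while naturality of $\psi$ at $\ve_A \colon GA \to A$ supplies the key identity $\psi \dcirc TG\ve = HT\ve \dcirc \psi G$. After these substitutions, the LHS collapses to $\delta'T \dcirc \psi \dcirc T\gamma$, the top path of (\ref{lift-equ-co}), and the RHS to $H\psi \dcirc HT\gamma \dcirc \psi G \dcirc T\delta$, the bottom path, once one recognises $\gamma = G\ve \dcirc \delta$. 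For (2), the compatibility of $(TG, \zeta)$ amounts to $\gamma'_{TG} \dcirc \zeta = \zeta$; post-composing with $HT\ve$ and applying the same naturality tricks, together with naturality of $\ve'$ at $T\ve$ (which gives $\ve'_T \dcirc HT\ve = T\ve \dcirc \ve'_{TG}$), recasts this equation as $\gamma' T \dcirc \psi \dcirc T\gamma = \psi \dcirc T\gamma$, which is precisely (\ref{lift-equ-reg-co}).

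For (3), I would invoke naturality of $\psi$ at the endomorphism $\ve_A \dcirc \up \colon A \to A$ to obtain $HT(\ve_A \dcirc \up) \dcirc \psi_A = \psi_A \dcirc TG(\ve_A \dcirc \up)$, pre-compose with $T\up$, and rewrite the right-hand side using the coassociativity $G\up \dcirc \up = \delta_A \dcirc \up$ to arrive at $\psi_A \dcirc T(G\ve_A \dcirc \delta_A \dcirc \up) = \psi_A \dcirc T(\gamma_A \dcirc \up)$; compatibility of the $G$-comodule yields $\gamma_A \dcirc \up = \up$, and the claim follows. Symmetry of $\ve$ ensures that Proposition \ref{G-comod} applies, so that $\ve_A \dcirc \up$ is the idempotent endomorphism inherent in the compatible comodule structure, keeping the computation within the weak-comonad framework. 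The main obstacle is the bookkeeping in (1): one must carefully identify which copy of $G$ in the iterated functor composition is contracted by which $\ve$ and apply naturality in the correct variable; once these manipulations settle, (2) and (3) follow along the same lines.
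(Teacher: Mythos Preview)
Your proposal is correct and follows the same approach as the paper, which simply states that the situation is dual to Proposition~\ref{nat.trans.p}; you have spelled out the dualization explicitly and correctly. One minor remark: your computation for (3) in fact uses only the comodule axiom $G\up\dcirc\up=\delta_A\dcirc\up$ and compatibility $\gamma_A\dcirc\up=\up$, so it does not actually need the symmetry of $\ve$ or Proposition~\ref{G-comod}; your closing comment about symmetry is therefore superfluous (though harmless, since symmetry is part of the hypothesis).
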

\begin{proof}
The situation is dual to that of Proposition \ref{nat.trans.p}. 
\end{proof}

\begin{thm} \label{reg-lift-p-co} {\bf Proposition.} 
  Let $(G,\delta,\ve)$ and $(H,\delta',\ve')$ be $r$-counital comonads on 
the categories $\A$ and $\B$, respectively, and $T:\A\to \B$ any functor.
A natural transformation $\psi:  TG\to HT$ induces a lifting 
$$\wT:\rA^G \to \rB^H, \quad (A,\up) \mapsto 
  (T(A), \psi \dcirc T\up: T(A)\to HT(A) ),$$ 
if and only if the diagram  {\em (\ref{lift-equ-co})} is commutative and
equation {\em (\ref{lift-equ-reg-co})} holds. 
\end{thm}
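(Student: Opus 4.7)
The plan is to dualise the proof of Proposition \ref{reg-lift-p}: the ``only if'' direction will use Proposition \ref{nat.trans.p.co} directly, while the ``if'' direction is a diagram chase in which compatibility of $(A,\up)$ plays the key role.

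For the ``only if'' direction, suppose a lifting $\wT: \rA^G \to \rB^H$ exists. Since $(G,\delta,\ve)$ is $r$-counital, $\delta$ is compatible, so $(G,\delta) \in \rA^G$. Applying $\wT$ gives a compatible $H$-comodule structure $\psi G \cdot T\delta$ on $TG$, which is exactly the map $\zeta$ of Proposition \ref{nat.trans.p.co}. Parts (1) and (2) of that proposition then deliver commutativity of diagram (\ref{lift-equ-co}) and equation (\ref{lift-equ-reg-co}), respectively.

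For the ``if'' direction, given $\psi$ satisfying both conditions, I set $\up_T := \psi_A \cdot T\up : T(A) \to HT(A)$ for each $(A,\up) \in \rA^G$, and verify the following.
\begin{rlist}
\item \emph{Coassociativity} $\delta'_{T(A)} \cdot \up_T = H\up_T \cdot \up_T$: precompose diagram (\ref{lift-equ-co}) evaluated at $A$ with $T\up$, rewrite the lower row by $T\delta_A \cdot T\up = TG\up \cdot T\up$ (coassociativity of $\up$), exchange $\psi_{G(A)} \cdot TG\up$ for $HT\up \cdot \psi_A$ by naturality of $\psi$ at the morphism $\up$, and absorb the factors $T\gamma_A$ and $HT\gamma_A$ via the compatibility identity $\gamma_A \cdot \up = \up$.
\item \emph{Compatibility} $H\ve'_{T(A)} \cdot \delta'_{T(A)} \cdot \up_T = \up_T$: precompose equation (\ref{lift-equ-reg-co}) with $T\up$ and apply $T\gamma_A \cdot T\up = T\up$ on both sides.
\item \emph{Functoriality}: for a morphism $f:(A,\up)\to (A',\up')$ in $\rA^G$, naturality of $\psi$ together with $TG(f) \cdot T\up = T\up' \cdot T(f)$ gives $\up'_T \cdot T(f) = HT(f) \cdot \up_T$, so that $T(f):(T(A),\up_T)\to (T(A'),\up'_T)$ is a morphism in $\rB^H$.
\end{rlist}

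The main obstacle is step (i): diagram (\ref{lift-equ-co}) only provides the weaker identity containing the idempotents $T\gamma$ and $HT\gamma$, so compatibility of $(A,\up)$ must be invoked in just the right places to make those idempotents disappear before naturality of $\psi$ produces the desired coassociativity. Everything else, including reading off $\wT(A,\up) = (T(A),\up_T)$ and $\wT(f) = T(f)$ to match the statement, is then routine.
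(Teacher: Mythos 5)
Your proposal is correct and follows essentially the same route as the paper, which disposes of the statement by duality with Proposition \ref{reg-lift-p}: the ``only if'' direction via Proposition \ref{nat.trans.p.co} applied to the coaction $\zeta=\psi G\dcirc T\delta$ on the free compatible comodules, and the ``if'' direction by the routine check that compatibility $\gamma_A\dcirc\up=\up$ absorbs the idempotents $T\gamma$, $HT\gamma$ before naturality of $\psi$ yields coassociativity. Your explicit verification of (i)--(iii) is exactly the ``slight modification of the (co)monad case'' the paper leaves to the reader.
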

\begin{proof} The proof is dual to that of Proposition \ref{reg-lift-p}.
\end{proof}

 Dualising Theorem \ref{reg-lift}, we obtain an extension of Applegate's lifting theorem for  comonads
(and comodules)  (e.g. \cite[3.5]{W-CoAl})
  to weak comonads (and compatible comodules).  

 \begin{thm} \label{reg-lift-co} {\bf Theorem.} 
  Let $(G,\delta,\ve)$ and $(H,\delta',\ve')$ be weak comonads on 
 $\A$ and $\B$, respectively.
%and $\rA^G$ and $\rB^H$ the categories of the compatible comodules.
For any functor $T:\A\to \B$, there are bijective correspondences between  
\begin{rlist}
\item liftings of $T$ to $\wT: \rA^G\to \rB^H$;
\item compatible $H$-comodule structures $\up:TU^G \to HTU^G$;
\item  natural transformations $\psi: TG\to HT$ 
 with commutative diagrams 
$$  
\xymatrix{ 
 TG  \ar[d]_{T\delta}  
 \ar[rr]^{\psi} & & H  T  \ar[d]^{\delta' T}  \\
 TGG \ar[r]^{\psi G}& HTG \ar[r]^{H\psi} & HHT,}  \qquad
\xymatrix{ TG \ar[r]^{T\gamma} \ar[d]_\psi \ar[dr]^\psi & TG \ar[d]^\psi \\
         HT \ar[r]_{\gamma'T} & HT.}
$$  
\end{rlist}
\end{thm}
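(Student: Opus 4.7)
The plan is to mirror, step by step, the proof of Theorem \ref{reg-lift}, turning each arrow upside down and replacing monadic data by comonadic data. The structure is a cycle (i) $\Lra$ (ii), (iii) $\Ra$ (i), (ii) $\Ra$ (iii).

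For (i) $\Lra$ (ii), I would invoke the dual of Proposition \ref{pop-mon} recorded at the end of \ref{q-comon-act}: a compatible $H$-comodule structure $\up:TU^G\to HTU^G$ is the same datum as a lifting $\wT:\rA^G\to \rB^H$ of $T$, via $\wT(A,\up_A)=(T(A),\up_{(A,\up_A)})$ satisfying $U^H\wT=TU^G$. This step is entirely formal.

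For (iii) $\Ra$ (i), given $\psi:TG\to HT$ satisfying the two diagrams, I would assign to each $(A,\up_A)\in\rA^G$ the coaction $\psi_A\cdot T\up_A:T(A)\to HT(A)$. The pentagon in (iii) is precisely diagram (\ref{lift-equ-co}), and the triangle is the reformulation (\ref{lift-equ-reg-co}); Proposition \ref{reg-lift-p-co} then produces the lifting $\wT$.

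The substantive step is (ii) $\Ra$ (iii). Dualising the formula $\lambda=\varrho_{\phi_F}\cdot LT\eta$ from Theorem \ref{reg-lift}, I would define
$$\psi \;:=\; HT\ve\cdot \up_{\phi^G}\;:\; TG\to HT,$$
evaluating the coaction on the free compatible comodules $\phi^G(A)=(G(A),\delta_A)$ and projecting along $HT\ve$. The pentagon in (iii) then follows from coassociativity of $\up$ as an $H$-coaction together with naturality of $\up$ applied to the $G$-morphism $\delta_A:\phi^G(A)\to\phi^GG(A)$. The triangle is where the weak comonad axioms genuinely bite: symmetry of $\ve$ (i.e.\ $\gamma=\ul\gamma$, see Proposition \ref{G-comod}) is precisely what matches the idempotent $T\gamma$ on $TG$ with the idempotent $\gamma'T$ on $HT$ under $\psi$, in parallel with part (3) of Proposition \ref{nat.trans.p.co}.

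Finally, I would close the loop by checking that the assignments (ii) $\leftrightarrow$ (iii) are mutually inverse. Starting from $\up$ and forming $\psi$, the induced coaction $\psi_A\cdot T\up_A$ equals $HT\ve_A\cdot \up_{\phi^G(A)}\cdot T\up_A$; naturality of $\up$ on the $G$-morphism $\up_A:(A,\up_A)\to\phi^G(A)$ rewrites this as $HT(\ve_A\cdot\up_A)\cdot\up_{(A,\up_A)}$, and compatibility of $(A,\up_A)$ together with symmetry of $\ve$ shows this returns $\up_{(A,\up_A)}$. Conversely, starting from $\psi$, evaluating the induced coaction on $\phi^G(A)$ and postcomposing with $HT\ve$ recovers $\psi$ by using compatibility of $\delta$ and the triangle diagram. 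I expect the triangle identity---substituting for a missing genuine counit---to be the main obstacle, with symmetry of $\ve$ the decisive tool.
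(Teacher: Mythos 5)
Your proposal is correct and follows essentially the same route as the paper, which simply dualises the proof of Theorem \ref{reg-lift}: (i)$\Lra$(ii) via the dual of Proposition \ref{pop-mon} stated in \ref{q-comon-act}, (iii)$\Ra$(i) via Proposition \ref{reg-lift-p-co}, and (ii)$\Ra$(iii) by evaluating the comodule structure at the free comodules and composing with $HT\ve$ (the paper phrases the needed adjustment as taking $\psi\dcirc T\gamma$ with $\psi$ from \ref{nat.trans.p.co}, which amounts to the same construction). Your emphasis on symmetry of $\ve$ for the triangle and for the inverse-check is exactly the point where the weak comonad axioms enter, paralleling \ref{nat.trans.p.co}(3).
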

\begin{proof} In view of \ref{nat.trans.p.co} and \ref{reg-lift-p-co},  
the proof is dual to that of Theorem \ref{reg-lift}. Here we take
$\psi$ as the composition $\psi\dcirc  T\gamma$ (with $\psi$ from \ref{nat.trans.p.co}).
\end{proof}

\section{Lifting of endofunctors to modules and comodules}\label{lift-e-qu-mod}

Given a  weak monad $(F,\mu,\eta)$, or a weak comonad $(G\,\delta,\ve)$, 
and any endofunctor $T$  on the category $\A$, 
we have learned in the preceding sections when $T$ can be lifted to 
an endofunctor of the compatible modules or comodules, respectively. 
Now, one may also ask if the lifting is again a weak monad or a
weak comonad, respectively.  

 \btm \label{qm-entw} {\bf Entwining $r$-unital monads.} For weak monads
 $(F,\mu,\eta)$ and $(T,\cmu,\ceta)$ on $\A$ and a natural transformation
$\lambda: FT\to T F $, the following are equivalent:
\begin{blist}
\item defining product and quasi-unit on $TF$ by
$$ 
 \omu:TFTF\stackrel{T\lambda F}\lra TTFF \stackrel{TT\mu}\lra TTF 
  \stackrel{\cmu F} \lra TF, \quad    
\oeta:\id_\A\stackrel{\eta}\lra F\stackrel{F\ceta}\lra FT
 \stackrel{\lambda}\lra TF , 
$$ 
 yields a weak monad $ (T F ,\omu,\oeta)$ on $\A$;

\item  $\lambda$  induces commutativity of the diagrams  
\begin{equation}\label{lift-equ-r-end}
\xymatrix{ 
F  F  T\ar[r]^{F \lambda} \ar[d]_{\mu T} &
 F  T F \ar[r]^{\lambda  F} & 
   T F F  \ar[d]^{T \mu} \\
  F  T \ar[rr]^{\lambda}& & T F  ,} \qquad
\xymatrix{ FT \ar[r]^{\vartheta T} \ar[d]_\lambda \ar[dr]^\lambda & FT \ar[d]^\lambda \\
         TF \ar[r]_{T\vartheta} & TF,}
\end{equation}

\begin{equation}\label{lift-q-mon}
\xymatrix{ FTT \ar[d]_{F\cmu} \ar[r]^{\lambda T} & TFT \ar[r]^{T\lambda} &
     TTF    \ar[d]^{\cmu F} \\
   FT \ar[rr]^{\lambda}& & TF ,}
\qquad
\xymatrix{ FT \ar[r]^{F\cvartheta } \ar[d]_\lambda \ar[dr]^\lambda & FT \ar[d]^\lambda \\
         TF \ar[r]_{\cvartheta F} & TF;}
\end{equation} 
\item $\lambda$ induces commutativity of the diagrams in  {\rm (\ref{lift-equ-r-end})} 
 and the square in ({\rm \ref{lift-q-mon}}),
 and there are natural transformations 
\begin{center}
 $\cmu  F : TTF\to TF $\quad and \quad $\lambda\dcirc F\ceta:F \to TF $ 
 \end{center}
where $\cmu F$ is a left and right $F$-module morphism and $\lambda\dcirc F\ceta$ is an 
$F$-module morphism.  
\end{blist} 
 If these conditions hold, we obtain morphisms of $q$-uni\-tal monads,
\begin{center}
$\lambda\dcirc F\ceta: F\to TF$ \quad and \quad $\lambda\dcirc \eta T: T\to TF$.
\end{center}  
\end{thm}

\begin{proof} The assertions follow from the general results in Section \ref{entw-qu-mon} and some
routine computations.
\end{proof}

\begin{thm}\label{crossed-2}{\bf Weak crossed products.} \em
Given $(F,\mu,\eta)$ and $T:\A\to \A$, 
the composition $TF$ may have a weak monad structure without 
requiring such a structure on $T$. For example, 
replacing the natural transformations 
 $\cmu  F$ and $\lambda\cdot F\ceta$ in \ref{qm-entw}(c) 
 by some natural transformations  
$$\nu: T T F\to T F, \quad \xi: F\to T F, $$
similar to  \ref{qm-entw}(a), a multiplication and a quasi-unit can be defined on $TF$. 
To make this a weak monad on $\A$, special conditions are to be imposed on $\nu$ and $\xi$
which can be obtained by routine computations.

Having $\nu$ and $\xi$, one also has natural transformations  
$$\bar\nu:\xymatrix{T T \ar[r]^{T T \eta\quad}& T T F 
         \ar[r]^{ \nu\;} & T F ,} \quad 
 \oeta:\xymatrix{\id_\A\ar[r]^{\eta} &F \ar[r]^{\xi\quad} & T F ,}$$
and it is easy to see that
$\bar\nu$ leads to the same product on $TF$ as $\nu$ does.
Thus $\bar\nu$ and $\oeta$ may be used to define a weak monad 
structure on $TF$ and the conditions required come out as 
{\em cocycle} and {\em twisted conditions}. For more details we refer, e.g., to 
 \cite{AFGR}, \cite[Section 3]{FerGon}. 
\end{thm}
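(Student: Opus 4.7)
The plan is to execute three steps in sequence, all modelled on \ref{qm-entw}. First, substitute $\nu$ for $\cmu F$ and $\xi$ for $\lambda\dcirc F\ceta$ in \ref{qm-entw}(a) to obtain the candidate product and quasi-unit
$$\omu \;=\; \nu\dcirc TT\mu \dcirc T\lambda F \colon TFTF\to TF, \qquad
\oeta \;=\; \xi\dcirc\eta\colon \id_\A \to TF,$$
and then translate each of the weak-monad axioms for $(TF,\omu,\oeta)$ (associativity of $\omu$, regularity and symmetry of $\oeta$, compatibility of $\omu$; see \ref{def-weak} and \ref{mon-cont}) into commuting diagrams in $\nu$, $\xi$, $\lambda$, $\mu$, $\eta$. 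The axioms involving $\lambda$ alone reproduce the diagrams (\ref{lift-equ-r-end}) and (\ref{lift-q-mon}); the remaining ones yield an associativity identity for $\nu$ (replacing associativity of $\cmu$), a right-$F$-linearity identity $\nu\dcirc TT\mu = T\mu\dcirc \nu F$, and a normalisation identity linking $\xi$, $\nu$, $\eta$. This is the ``routine computation'' alluded to in the statement, carried out diagram by diagram using only the weak-monad axioms of $(F,\mu,\eta)$.

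Second, set $\bar\nu := \nu\dcirc TT\eta$ and $\oeta := \xi\dcirc\eta$. The assertion that $\bar\nu$ leads to the same product on $TF$ as $\nu$ reduces to the identity
$$\nu \;=\; T\mu\dcirc \bar\nu F \;=\; T\mu\dcirc \nu F\dcirc TT\eta F \colon TTF\to TF.$$
I would verify it as follows: the right-$F$-linearity $\nu\dcirc TT\mu = T\mu\dcirc \nu F$ from the first step rewrites the right-hand side as $\nu\dcirc TT\mu\dcirc TT\eta F = \nu\dcirc TT\ul\vartheta$, where $\ul\vartheta = \mu\dcirc \eta F$ is the idempotent from \ref{quasi-mon}. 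Compatibility of $\omu$, when expanded in $\nu$, forces $\nu$ to absorb the idempotent $\ul\vartheta$ on its $F$-slot, i.e.\ $\nu\dcirc TT\ul\vartheta = \nu$; this absorption is a consequence of compatibility of $\mu$ and symmetry of $\eta$ in the weak monad $(F,\mu,\eta)$, and is one of the identities produced in the first step.

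Third, substitute $\nu = T\mu\dcirc \bar\nu F$ and the analogous factorisation of $\xi$ through $\oeta$ into the list of conditions collected in the first step. The transformed identities take the familiar shape of a \emph{cocycle} condition on $\bar\nu$ (replacing associativity of $\cmu$) and a \emph{twisted} compatibility between $\lambda$ and $\bar\nu$, recovering the axioms of weak crossed products recorded in \cite{AFGR} and \cite[Section~3]{FerGon}. The main obstacle is not in any single diagram but in the bookkeeping: in the classical unital monad case one uses $\mu\dcirc \eta F = \id = \mu\dcirc F\eta$ repeatedly, while here both fail and must be replaced by the insertion and subsequent absorption, justified via compatibility of $\mu$ and symmetry of $\eta$, of the natural transformations $\vartheta$ and $\ul\vartheta$; keeping track of when these absorption steps are legal is the entire substance of the ``routine computations'' mentioned in the statement.
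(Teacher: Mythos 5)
The paper offers no proof here---\ref{crossed-2} is an expository remark that defers the ``routine computations'' to \cite{AFGR} and \cite[Section 3]{FerGon}---and your outline fills them in exactly as intended: the formulas $\omu=\nu\dcirc TT\mu\dcirc T\lambda F$ and $\oeta=\xi\dcirc\eta$, and the reduction of ``$\bar\nu$ gives the same product'' to the absorption identity $\nu\dcirc TT\ul{\vartheta}=\nu$ via right $F$-linearity of $\nu$, are all correct and match the intended argument. One small correction: that absorption identity cannot be a \emph{consequence} of compatibility of $\mu$ and symmetry of $\eta$ alone, since $\nu$ is an arbitrary natural transformation not built from $\mu$; it has to be listed among the normalisation conditions imposed on $\nu$ in your first step---which your phrase ``one of the identities produced in the first step'' already concedes.
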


For a weak comonad $(G,\delta,\ve)$ and an endofunctor $T:\A\to \A$,
we now consider liftings to the category of  
compatible $G$-comodules, $\wT: \rA^G\to \rA^G$.
The case when $T$ has a weak comonad structure is  dual to \ref{qm-entw}: 
 
\begin{thm} \label{qcom-entw} {\bf Entwining weak comonads. } 
For weak comonads $(F,\delta,\ve)$, $(T,\cdelta,\cve)$, 
and a natural transformation
$\psi: TG\to GT$, the following are equivalent:
\begin{blist}
 \item  defining a coproduct and quasi-counit on $TG$ by 
$$  
 \wdelta: TG\stackrel{\cdelta G}\lra TTG  \stackrel{TT\delta}\lra TTGG
\stackrel{T\psi G}\lra  TGTG, \quad    
\wve: TG \stackrel{\psi}\lra GT \stackrel{G\cve}\lra G \stackrel{\ve}\lra \id_\A ,
$$   
 yields a weak comonad $(TG,\wdelta,\wve)$ on $\A$;

\item  $\psi$ induces commutativity of the diagrams, where $\gamma =  T\ve \dcirc \delta$,
 $\cgamma =  T\cve \dcirc \cdelta $,
\begin{equation}\label{lift-equ-e-co}
\xymatrix{ 
 TG  \ar[d]_{T\delta}  
 \ar[rr]^{\psi} & & G  T  \ar[d]^{\delta T}  \\
 TGG \ar[r]^{\psi G}& GTG \ar[r]^{G\psi} & GGT,} \qquad
\xymatrix{TG \ar[r]^{T\gamma} \ar[d]_\psi \ar[dr]^\psi &  TG \ar[d]^\psi \\
         GT \ar[r]_{\gamma T} & GT,}
\end{equation}
\begin{equation}\label{lift-comon}
\xymatrix{ TG \ar[rr]^\psi \ar[d]_{\cdelta G}& & GT \ar[d]^{G\cdelta} \\
  TTG \ar[r]^{T\psi} & TGT \ar[r]^{\psi T} & GTT ,} \qquad
\xymatrix{TG \ar[r]^{{\cgamma} G} \ar[d]_\psi \ar[dr]^\psi &  TG \ar[d]^\psi \\
         GT \ar[r]_{ G {\cgamma}} & GT,}
\end{equation} 

\item $\psi$ induces commutativity of the diagrams {\rm (\ref{lift-equ-e-co})}
and the square in {\rm (\ref{lift-comon})} and
we have natural transformations 
$$ \cdelta G: TG\to TTG, \quad G\cve \dcirc \psi  : TG\to G,$$ 
where $\cdelta G$ is a left and right  $G$-comodule morphism and $G\cve \dcirc \psi$ is 
a left $G$-comodule morphism. 
\end{blist}
 If these conditions hold, we obtain morphisms of $q$-unital comonads,
\begin{center}  
$G\cve \dcirc \psi: TG\to G$ \, and \, 
$ \ve T  \dcirc \psi: TG\to T$.
 \end{center}                       
\end{thm}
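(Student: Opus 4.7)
The plan is to dualise the proof of Theorem \ref{qm-entw}, appealing to the machinery set up in Section \ref{entw-qu-mon} (specifically Theorem \ref{reg-lift-co} and Proposition \ref{reg-lift-p-co}) to reduce everything to routine diagrammatic verifications.

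First I would tackle the equivalence (a) $\Leftrightarrow$ (b). Unfolding the definition
\[
\wdelta \;=\; T\psi G \circ TT\delta \circ \cdelta G,\qquad \wve \;=\; \ve\circ G\cve\circ \psi,
\]
I would verify the four weak-comonad axioms for $(TG,\wdelta,\wve)$ in turn: coassociativity of $\wdelta$, regularity and symmetry of $\wve$, and compatibility of $\wdelta$. Coassociativity should paste the left-hand hexagons of (\ref{lift-equ-e-co}) and (\ref{lift-comon}) together with coassociativity of $\delta$ and $\cdelta$. Regularity and symmetry of $\wve$ reduce to the corresponding properties of $\ve$ and $\cve$ once the hexagons allow $\psi$ to be moved past the counits. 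Compatibility of $\wdelta$ should reduce to compatibility of $\delta$ and $\cdelta$ combined with the two symmetry (right-hand) squares in (\ref{lift-equ-e-co}) and (\ref{lift-comon}). The reverse direction is obtained by pre- and post-composing the axioms of $(TG,\wdelta,\wve)$ with $\ve$ on one leg and $\cve$ on another, so that each diagram in (b) is recovered as a shadow of the corresponding weak-comonad axiom.

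For (b) $\Leftrightarrow$ (c), I would read off the content of the morphism conditions directly. By (the dual of) Proposition \ref{pop-mon} together with the description of compatible comodules in \ref{q-comon-act}, the assertion that $\cdelta G:TG\to TTG$ is a left $G$-comodule morphism is precisely the $T\gamma$-square in (\ref{lift-equ-e-co}), while its being a right $G$-comodule morphism is precisely the $\cgamma G$-square in (\ref{lift-comon}). Similarly, $G\cve\circ \psi:TG\to G$ being a left $G$-comodule morphism encodes the remaining symmetry condition. Thus (c) simply repackages the right-hand squares of (b) while retaining the two left-hand hexagons as explicit hypotheses. The concluding statements that $G\cve\circ \psi:TG\to G$ and $\ve T\circ \psi:TG\to T$ are morphisms of $q$-counital comonads then fall out: compatibility with coproducts is forced by the hexagons, and compatibility with the quasi-counits is immediate from the formula for $\wve$.

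The main obstacle I expect to be the verification of coassociativity of $\wdelta$: iterating its definition produces a lengthy composite involving $\cdelta$, $\delta$ and two occurrences of $\psi$, which has to be reorganised by alternating applications of the hexagons in (\ref{lift-equ-e-co}) and (\ref{lift-comon}) so that the entwining $\psi$ slides past both coproducts consistently. It is at this step that the full strength of the entwining condition on $\psi$ is consumed; everything else, once the hexagons are in hand, reduces to routine bookkeeping dual to the monad case of Theorem \ref{qm-entw}.
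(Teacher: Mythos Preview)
Your overall strategy---dualise Theorem \ref{qm-entw} and invoke the lifting machinery of Section \ref{entw-qu-mon}---matches the paper, which offers no proof beyond this remark. Your treatment of (a)$\Leftrightarrow$(b) is sound.

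However, your analysis of (b)$\Leftrightarrow$(c) rests on a misreading of (c). Condition (c) explicitly retains \emph{both} diagrams in (\ref{lift-equ-e-co}) \emph{and} the square in (\ref{lift-comon}); the only piece of (b) that is dropped is the \emph{rectangle} in (\ref{lift-comon}). Hence the morphism conditions on $\cdelta G$ and $G\cve\dcirc\psi$ are not a repackaging of the right-hand squares---those are still assumed outright---but must stand in for the missing hexagon expressing compatibility of $\psi$ with $\cdelta$. Concretely: once the diagrams in (\ref{lift-equ-e-co}) hold, $TG$ and $TTG$ acquire left $G$-coactions through the lifting of $T$ (namely $\psi G\dcirc T\delta$ on $TG$ and $\psi TG\dcirc T\psi G\dcirc TT\delta$ on $TTG$), and the requirement that $\cdelta G$ respect these coactions unwinds, via naturality of $\cdelta$, to the rectangle in (\ref{lift-comon}) whiskered by $G$; the condition on $G\cve\dcirc\psi$ plays the companion role. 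Your proposed identifications (left $G$-colinearity $\leftrightarrow$ the $T\gamma$-square, right $G$-colinearity $\leftrightarrow$ the $\cgamma G$-square) are therefore off target, and the argument you sketch for (b)$\Leftrightarrow$(c) would not go through as written. The repair is routine once (c) is read correctly, but you need to reidentify what the comodule-morphism conditions actually encode before the bookkeeping can proceed.
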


\btm\label{crossed-co}{\bf Weak crossed coproducts.} \em 
In the situation of \ref{qcom-entw},
the coproduct on $TG$ 
can also be expressed by replacing the natural transformations 
$\cdelta G$ and $G\cve\dcirc \psi$ by any natural transformations 
\begin{center}
$ \nu: TG\to TTG $ \; and \; $\zeta:  TG\to G $,
\end{center}
 subject to certain conditions to obtain a weak comonad structure on $TG$. 
 
Given $\nu$ and $\zeta$ as above, one may form 
$$\widehat\nu:\xymatrix{TG\ar[r]^\nu & TTG \ar[r]^{TT\ve} & TT},\quad
   \widehat\zeta: \xymatrix{TG\ar[r]^\zeta & G \ar[r]^{\ve} & \id_\A},$$
and it is easy to see that these induce a weak comonad structure on $TG$.
This leads to the {\em weak crossed coproduct} as considered (for coalgebras)
in \cite{FerGon} and \cite{FeGoRo}, for example.
\etm

\section{Mixed entwinings and liftings}\label{mixed-entw}

Throughout this section let $(F,\mu,\eta)$ denote
 a weak monad and $(G,\delta,\ve)$ a 
weak comonad on any category $\A$.
In this section we investigate the lifting properties to compatible $F$-modules and 
compatible $G$-comodules, respectively. 

\btm\label{lift-func}{\bf Liftings of monads and comonads.} \em 
Consider the diagrams  
$$ \xymatrix{ \rA_F \ar[rr]^\oG \ar[d]_{U_F} & & \rA_F \ar[d]^{U_F} \\
       \A \ar[rr]^{G} & & \A, }  \quad
 \xymatrix{ \rA^G \ar[rr]^\wF \ar[d]_{U^G} & & \rA^G \ar[d]^{U^G} \\
       \A \ar[rr]^{F} & & \A. }$$
In both cases the lifting properties are related to a natural transformation
 $$\omega: FG\to GF.$$ 
The lifting in the left hand case requires commutativity of the diagrams
(Proposition \ref{reg-lift-p})
\begin{equation}\label{m-lift-mon}
\xymatrix{ 
F  F G\ar[r]^{F  \omega} \ar[d]_{\mu G} &
 F  G F \ar[r]^{ \omega  F} & 
   G F F  \ar[d]^{G \mu} \\
  F  G \ar[rr]^{ \omega}& & G F ,}
\quad 
\xymatrix{  F G\ar[r]^{\omega} \ar[d]_{\vartheta G} \ar[dr]^\omega  & GF \ar[d]^{G\vartheta} \\
 F G\ar[r]^{\omega} & GF ,}
\end{equation}
whereas the lifting to $\rA^G$ needs commutativity of the diagrams 
(Proposition \ref{reg-lift-p-co})
\begin{equation}\label{m-lift-com}
\xymatrix{ 
 FG  \ar[d]_{F\delta}  
 \ar[rr]^{\omega} & & G  F  \ar[d]^{\delta F}  \\
 FGG \ar[r]^{\omega G}& GFG \ar[r]^{G\omega} & GGF,} \quad
 \xymatrix{ 
 FG \ar[r]^\omega \ar[d]_{F\gamma} \ar[dr]^\omega  & GF \ar[d]^{\gamma F} \\
 FG \ar[r]^\omega & GF .}
\end{equation}
 
To make $\oG$ a non-counital comonad with coproduct
 $\delta$, the latter has to be an $F$-module morphism,
in particular, $\delta F: GF\to GGF$ has to be an $F$-morphism 
 and this follows by commutativity of the rectangle in (\ref{m-lift-com})
provided the square in (\ref{m-lift-mon})  is commutative.   

To make the lifting $\wF$ a non-unital monad
with multiplication $\mu$, the latter has to be a $G$-comodule morphism, 
in particular, 
$\mu G: FFG\to FG$ has to be a $G$-module morphism and this   
follows by commutativity of the rectangle 
in (\ref{m-lift-mon}) provided the square in (\ref{m-lift-com}) is commutative. 
\etm

 \btm\label{mix-nat-trans} {\bf Natural transformations.} \em
 The data given in \ref{lift-func} allow for natural transformations
 $$\begin{array}{rl}
 \xi:& \xymatrix{ G\ar[r]^{\eta G} & FG \ar[r]^\omega & GF \ar[r]^{\ve F} & F },\\
 \wkappa: & 
 \xymatrix{ GF\ar[r]^{\eta GF}& FGF \ar[r]^{\omega F} & GFF \ar[r]^{G\mu}& GF},\\
 \wtau: & \xymatrix{ FG \ar[r]^{F\delta} & FGG \ar[r]^{\omega G} & GFG \ar[r]^{\ve FG} & FG, }
 \end{array}$$
 with the  properties 
 $$\begin{array}{rl}
  G\mu\dcirc \wkappa F= \wkappa\dcirc G\mu, & 
   \wtau G\dcirc F\delta = F\delta \dcirc \wtau, \\[+1mm] 
  \mu\dcirc \xi F=  \ve F\dcirc \wkappa, &
   \xi G\dcirc \delta = \wtau\dcirc \eta G. \end{array}$$
 \begin{rlist}
 \item {\em If the rectangle in (\ref{m-lift-mon}) is commutative, then $\wkappa$ is idempotent.}
 \item {\em If the rectangle in (\ref{m-lift-com}) is commutative, then $\wtau$ is idempotent.}
 \end{rlist}\etm
%Note that (i) is a special case of Lemma \ref{Apple-prop}(4); the proof 
%of (ii) is dual to that for (i).
\smallskip

To make the liftings weak comonads or weak monads, respectively,
we have to find pre-units or pre-counits, respectively. In what follows we
 consider these questions.

\btm\label {qu-coun}{\bf Lemma.} (Pre-counits for $\oG$) 
Assume the diagrams in {\rm (\ref{m-lift-mon})} to be commutative. Then
the following are equivalent:
\begin{blist}
\item for any $(A,\varphi)\in \rA_F$,  
      $\ve_A: G(A)\to A$ is an  $F$-module morphism;
\item $\ve F: GF\to F$ is an $F$-morphism;
\item $\vartheta= \mu \dcirc F\eta$ induces commutativity of the diagram
\begin{equation}\label{cond-ve}
\xymatrix{
  FG \ar[r]^{F\ve} \ar[d]_\omega & F \ar[d]^{\vartheta} \\ 
  GF \ar[r]^{\ve F} & F .}
\end{equation} 
\end{blist}

If these conditions are satisfied, then (with   $\gamma= G\ve \dcirc \vartheta$)
$$\mu G \dcirc F\wtau = \wtau\dcirc \mu G \quad \mbox{ and } \quad  
\wtau = \vartheta \gamma.$$  
\etm 
\begin{proof} This is shown by straightforward verification.
\end{proof}

\btm\label{reg-ve}{\bf Proposition.} 
 Assume the diagrams 
 in {\rm (\ref{m-lift-mon}), (\ref{m-lift-com})} and {\rm (\ref{cond-ve})}
 to be commutative. 
Then $(\oG,\delta,\ve)$ is a weak comonad on $\rA_F$.
\etm
\begin{proof} This follows from the preceding observations. 
\end{proof}
 
Dual to Lemma \ref{qu-coun} and \ref{reg-ve}
we obtain  for the quasi-units for $\wF$: 

\btm \label{unit-F}{\bf Lemma.} (Pre-units for $\wF$) 
Assume the diagrams in {\rm (\ref{m-lift-com})} to be commutative. Then
the following are equivalent:
\begin{blist}
\item for any $(A,\up)\in \rA^G$,  
      $\eta_A: A \to F(A)$ is a $G$-comodule morphism;
\item $\eta G: G\to FG$ is $G$-colinear;
\item $\gamma= G\ve\dcirc \delta$ induces commutativity of the diagram
\begin{equation}\label{eta-unit}
\xymatrix{ G\ar[d]_\gamma \ar[r]^{\eta G} & FG \ar[d]^\omega \\
  G \ar[r]^{G\eta} & GF . }
\end{equation} 
\end{blist}
If these conditions are satisfied, then 
$$G\wkappa \dcirc \delta F = \delta F \dcirc\wkappa \quad \mbox{ and }\quad 
 \wkappa = \gamma \vartheta . $$
\etm 

Summing up the above observations yields the

\btm\label{reg-eta}{\bf Proposition.}  
 Assume the diagrams in 
{\rm (\ref{m-lift-mon}), (\ref{m-lift-com})} and {\rm (\ref{eta-unit})}
 to be commutative. 
Then $(\wF,\mu,\eta)$ is a weak monad on $\rA^G$.
\etm

 One may consider alternative  choices for a pre-counit for $\oG$ or a pre-unit for $\wF$.

\btm\label{alt-qu-coun}{\bf Lemma.}  
Assume the diagrams in {\rm (\ref{m-lift-mon})} to be commutative.
With the notations from {\rm \ref{mix-nat-trans}}, the following are equivalent:
\begin{blist}
\item for any $(A,\varphi)\in \rA_F$,  
 $ \ove_A: \xymatrix{ 
 G(A) \ar[r]^{\xi_A}  & F(A) \ar[r]^\varphi & A}$ 
   is an $F$-module morphism;
\item $ \ove F: \xymatrix{GF\ar[r]^{\xi F} &FF \ar[r]^\mu & F}$ (=
$\xymatrix{ 
 GF \ar[r]^{\wkappa} & GF \ar[r]^{\ve F} & F}$) is an $F$-morphism; 
 
\item commutativity of the diagram
\begin{equation}\label{counit-2}
\xymatrix{  FFG \ar[r]^{F\omega} & 
        FGF \ar[r]^{F\ve F} & FF \ar[d]^\mu \\
    FG  \ar[u]^{F\eta G} \ar[r]^\omega & GF \ar[r]^{\ve F} &F .}  
\end{equation}
If these conditions are satisfied, then   
 $$ \wtau =\mu G\dcirc  F\wtau \dcirc F\eta G .$$ 
\end{blist}
\etm 
\begin{proof} The proof can be obtained by some diagram constructions.
\end{proof}

Notice that commutativity of (\ref{cond-ve}) implies commutativity of (\ref{counit-2}).

 \btm\label {alt-qu-un}{\bf Lemma.}   
Assume  the diagrams in {\rm (\ref{m-lift-com})} to be commutative. Then
the following are equivalent:
\begin{blist}
\item for any $(A,\up) \in \rA^G$, 
   $\widehat \eta:
\xymatrix{ A \ar[r]^{\up\quad} & G(A) \ar[r]^{\xi_A}  & F(A) } $ 
is a $G$-comodule morphism;
\item 
$\weta G: \xymatrix{G \ar[r]^{\eta G}& FG \ar[r]^{\wtau} & FG}$
$( = \xymatrix{G \ar[r]^{\delta}& GG \ar[r]^{\xi G} & FG})$
        is $G$-colinear;
\item commutativity of the diagram
\begin{equation}\label{unit-2}
\xymatrix{ G\ar[d]_\delta \ar[r]^{\eta G} & FG \ar[r]^\omega & GF \\
  GG \ar[r] ^{G\eta G} & GFG \ar[r]^{G\omega} & GGF \ar[u]_{G\ve F} . }
\end{equation}
\end{blist}

If these conditions are satisfied, then 
$$\wkappa = G\ve F \dcirc G\wkappa \dcirc \delta F.$$
\etm 
\begin{proof} The situation is dual to Lemma \ref{alt-qu-coun}. \end{proof}

Notice that commutativity of (\ref{eta-unit}) implies commutativity of (\ref{unit-2}).

\btm \label{oG}{\bf Proposition.} With the data given in {\rm \ref{lift-func}},
assume 
the diagrams in {\rm (\ref{m-lift-mon}), (\ref{m-lift-com})} 
and {\rm (\ref{counit-2})} to be commutative.  
\begin{zlist}
\item If {\rm (\ref{unit-2})} is commutative, then
$\ove$ from {\rm \ref{alt-qu-coun}} is regular for $\delta$, 
and for $\odelta:G\to GG$ with
$$  
\odelta F: \xymatrix{ 
GF\ar[r]^{\delta F} & GGF \ar[r]^{G\wkappa} & GGF,}
$$  
 $(\oG,\odelta,\ove)$ is an $r$-counital comonad on $\rA_F$.
 
\item If {\rm (\ref{eta-unit})} is commutative, then 
   $\odelta  F=  \delta F \dcirc\wkappa$ 
   and  $(\oG,\odelta,\ove)$ is a weak comonad on $\rA_F$.
\end{zlist}
\etm 
\begin{proof}  This can be shown by suitable diagram constructions. 
\end{proof}

\btm \label{wF}{\bf Proposition.} With the data given in {\rm \ref{lift-func}},
assume  
the diagrams in {\rm (\ref{m-lift-mon}), (\ref{m-lift-com})}, and 
{\rm (\ref{unit-2})} to be commutative.  
\begin{zlist}
\item If {\rm (\ref{counit-2})} is commutative, then
$\weta$ in {\rm \ref{alt-qu-un}} is  regular for $\mu$, and
for $\wmu:FF\to F$ with 
$$  
\wmu G: \xymatrix{  FFG \ar[r]^{F\wtau } & FFG \ar[r]^{\mu G} & FG,}
$$  
 $(\wF,\wmu,\weta)$ is an $r$-unital monad on $\rA^G$.
 
\item If {\rm (\ref{cond-ve})} is commutative, then 
  $\wmu G= \wtau\dcirc \mu G $ and   
   $(\wF,\wmu,\weta)$ is a weak monad on $\rA^G$.
\end{zlist}
\etm 
\begin{proof} This is dual to Proposition \ref{oG}. \end{proof}

\bigskip

{\bf Acknowledgments.} The author wants to thank Gabriella B\"ohm,
Tomasz Brzezi\'nski and Bachuki Mesablishvili for their interest in a previous
version of this paper 
and for helpful comments on the subject.  
\medskip

\end{document}